\documentclass[10pt,a4paper]{article}

\usepackage[latin1]{inputenc}
\usepackage[english]{babel}
\usepackage{enumerate}
\parindent0cm
\addtolength{\topmargin}{-1.7cm}
\addtolength{\parskip}{0.2cm}
\textheight24cm
\addtolength{\textwidth}{1cm}
\addtolength{\oddsidemargin}{-0.5cm}

\usepackage[usenames, dvipsnames]{xcolor}
\usepackage{amssymb}
\usepackage{amsfonts}
\usepackage{amsmath}
\usepackage{amsthm}
\usepackage{epsfig} 
\usepackage{graphics}
\usepackage{dsfont}

\usepackage{verbatim}

\usepackage{latexsym}

\textwidth16cm
\textheight23cm
\topmargin-1cm
\oddsidemargin0cm
\evensidemargin0cm
\frenchspacing

\newtheorem{theorem}{Theorem}[section]
\newtheorem{lemma}[theorem]{Lemma}

\newtheorem{corollary}[theorem]{Corollary}

\theoremstyle{definition}
\newtheorem{remark}[theorem]{Remark}

\newtheorem{example}[theorem]{Example}

\theoremstyle{definition}

\allowdisplaybreaks[4]

\newcommand{\RR}{\mathbb{R}}

\newcommand{\EE}{\mathbb{E}}

\newcommand{\Fskript}{\mathcal{F}}
\newcommand{\PP}{\mathbb{P}}
\newcommand{\cB}{\mathcal{B}}
\newcommand{\cC}{\mathcal{C}}

\newcommand{\cA}{\mathcal{A}}

\newcommand{\R}{\RR}
\newcommand{\Prob}{\PP}
\newcommand{\eqd}{\stackrel{d}{=}}
\newcommand{\dtv}{d_{TV}}
\newcommand{\E}{\EE}
\newcommand{\One}{\mathds 1}
\newcommand{\Bskript}{\cB}
\numberwithin{equation}{section}

\begin{document}
\title{Markovian Maximal Coupling of Markov Processes}
\author{Bj\"orn B\"ottcher \thanks{Technische Universit\"at Dresden, 
Institut f\"ur Mathematische Stochastik, D-01062 Dresden, Germany}}
\date{November 3, 2016\footnote{Date of submission to journal.}}
\maketitle

\begin{abstract}

Markovian maximal couplings of Markov processes are characterized by an equality of total variation and a distance of Wasserstein type. If a Markovian maximal coupling is a Feller process, the generator can be calculated, e.g. for reflection coupled Brownian motion. Apart from processes with continuous paths also jump processes are treated for the first time. For subordinated Brownian motion a Markovian maximal coupling is constructed by subordinating reflection coupled Brownian motion. This coupling is the unique Markovian maximal coupling and its generator is determined by state-space dependent mirror coupling of the corresponding L\'evy measures.

\end{abstract}

{\sl Key words and phrases.}  Coupling, L\'evy Process, Markov Process, Brownian Motion, Maximal Coupling, Mirror Coupling, Reflection Coupling

2010 {\sl Mathematics subject classification.} 
60G05, 60J25, 60J35

\section{Introduction}
Coupling methods are a powerful probabilistic tool, for an (extensive) overview see e.g. \cite{Lind1992, Thor2000}.
For two stochastic processes on a common state space any joint distribution is a coupling of these processes and the first time their paths meet is called coupling time.
Roughly speaking, a maximal coupling minimizes the coupling time of the coupled processes and it is used e.g. to study ergodicity, convergence rates or the spectral gap, cf. \cite{BurdKend2000},\cite[Remark 2.4]{Kuwa2009}. The first construction of a maximal coupling was given by Griffeath \cite{Grif1975} (see also Goldstein \cite{Gold1979}) for Markov chains and later extended to continuous time and continuous state-space by Sverchkov and Smirnov \cite{SverSmir1990}.

Constructions for diffusion processes (on $\R^d$ and on manifolds) have been the focus of recent research, e.g. Kuwada, Hsu, Sturm, Banerjee and Kendall \cite{Kuwa2009,HsuStur2013,BaneKend2014,BaneKend2015}. 

Besides maximality further properties of a coupling are of interest: A coupling of Markov processes is called {Markovian} if it is a Markov process and -- slightly more general -- it is called {co-adapted} if the coupled components are Markov processes with respect to the joint filtration. Here we use the terminology as in \cite{BurdKend2000,Kend2010,Conn2013}. Instead of co-adapted also the term Markovian \cite{BaneKend2014, HsuStur2013,Kuwa2009} has been used. Closely related concepts are {faithful} \cite{Rose1997} and {immersed} \cite{Kend2015, BaneKend2015} couplings.

Existence and construction of Markovian or co-adapted maximal couplings has been discussed in some particular cases. Hsu and Sturm \cite{HsuStur2013} proved that the reflection coupling is the unique co-adapted maximal coupling of Brownian motions in $\R^d$. Kuwada \cite{Kuwa2009} gave an example of a Markov chain on a discrete state-space for which no co-adapted maximal coupling exists. He also showed that any co-adapted maximal coupling of Brownian motions on a Riemannian manifold has to be a reflection coupling with an appropriate reflection structure. Connor \cite[Lemma 3.12]{Conn2007} showed that for Brownian motion with non Dirac initial distribution, in general, no co-adapted maximal coupling exists. Banerjee and Kendall \cite{BaneKend2014} discussed the existence of a co-adapted maximal coupling for diffusions (under some regularity conditions). They presented necessary and sufficient conditions on the drift.

Chen and Li studied generators of Markovian couplings, e.g. \cite{ChenLi1989,Chen1986,Chen1994a}. In particular, optimality (which is in some sense a generalization of maximality) of the couplings with respect to certain Wasserstein distances was discussed.

In general, \textit{maximal couplings are, in most cases, unintuitive non-Markovian affairs, and extremely difficult to work with} \cite[p. 1118]{Conn2013}. However, if the maximal couplings are Markovian they are easy to work with. But so far only maximal Markovian couplings are known for processes with continuous paths or in discrete time. Extending the ideas of \cite{HsuStur2003, HsuStur2013} we construct and show the uniqueness of Markovian maximal couplings for particular jump processes, namely subordinated Brownian motions. Furthermore, using the theory of Feller processes we are able to calculate the generator of the Markovian maximal coupling.

In the next section the basics of (maximal) coupling are introduced. Section \ref{sec:charmmc} provides the characterization of Markovian maximal couplings. Thereafter subordinated Brownian motions are coupled and finally the generators of Markovian couplings are discussed for Feller processes. In an appendix (Section \ref{appendix}) some non-standard results for L\'evy and Feller processes are summarized.

The results of the paper can also be summed up in the following way: In Theorem \ref{thm:mmc-characterization} (Equation \eqref{dtv-markovian-maximal-eq}) a Markovian maximal coupling is characterized using a Wasserstein type distance.  This equation together with the theory of optimal transport, e.g. \cite{Vill2009}, can be used to prove uniqueness of the maximal coupling, in particular for subordinated Brownian motion (Theorem \ref{thm:mmc-subordinated-bm-uniqueness}). If a Markovian coupling is a Feller process (Theorem \ref{thm:rcbm-feller} and Corollary \ref{thm:srcbm-feller}) one can calculate the generators of the couplings considered: for reflected Brownian motion \eqref{eq:rcbm-generator} and for subordinated Brownian motion \eqref{eq:mmc-feller-generator}.

\section{Basics}

We start with maximal coupling of random variables (see e.g. \cite[Section 1.4]{Thor2000}). Let $X,Y$ be random variables on a common probability space $(\Omega,\cA,\PP)$ with distributions $\PP_X$ and $\PP_Y$, respectively. Any pair $(\widetilde{X},\widetilde{Y})$ is called a \textbf{coupling} of $X,Y$ if $\widetilde{X} \eqd X$ and $\widetilde{Y} \eqd Y$. This will be denoted by $(\widetilde{X},\widetilde{Y})\in \cC(X;Y)$. Here $\eqd$ denotes the equality of the distributions. For $(\widetilde{X}^M,\widetilde{Y}^M) \in \cC(X;Y)$ the set $\{\widetilde{X}^M=\widetilde{Y}^M\}$ is called \textbf{coupling event} and the coupling is called \textbf{maximal} if it maximizes the probability of the coupling event, i.e.,
\begin{equation} \label{max-coupling}
\PP(\widetilde{X}^M=\widetilde{Y}^M) = \sup_{(\widetilde{X},\widetilde{Y})\in \cC(X;Y)} \PP(\widetilde{X}= \widetilde{Y}).
\end{equation}
A maximal coupling always exists (cf. for example \cite[Section 3.1]{Conn2007}), the construction is sometimes called Wasserstein (Vasershtein) coupling. If $X$ and $Y$ have densities $f$ and $g$ with respect to some measure $\lambda$, respectively, then 
\begin{equation}
\PP(\widetilde{X}^M=\widetilde{Y}^M) = \int f\land g\, d\lambda,
\end{equation}
where $a\land b:= \min(a,b)$.

Note that a maximal coupling only fixes the probability of the set on which the random variables coincide, but when they do not coincide the joint distribution is not determined. Thus, in general, a maximal coupling is not unique. As a side remark, note that any coupling can be constructed using a copula but there is no fixed maximal coupling copula \cite{MaiSche2014}. 
We define the total variation distance of $X$ and $Y$ by
\begin{equation}
\dtv(X,Y):= \sup_{A\in\cA} |\PP_X(A)-\PP_Y(A)|.
\end{equation}
In terms of probability distances, $\dtv$ is the minimal distance of the compound distance $d_I(X,Y):=\Prob( X \neq Y)$, i.e. 
\begin{equation} \label{tv_inf}
\dtv(X,Y) = \inf_{(\widetilde{X},\widetilde{Y})\in \cC(X;Y)} \PP(\widetilde{X}\neq \widetilde{Y})
\end{equation}
and thus
\begin{equation}
\label{tv_eq}
\dtv(X,Y) = \PP(\widetilde{X}^M \neq \widetilde{Y}^M)
\end{equation}
for any maximal coupling $(\widetilde{X}^M,\widetilde{Y}^M)$ of $X$ and $Y$.
Moreover, if $X$ has a density $f$ and $Y$ has a density $g$ with respect to a measure $\lambda$ then
\begin{equation} \label{dtv-density}
\dtv(X,Y)  = \int (f-g)^+\, d\lambda= \frac{1}{2} \int |f-g|\, d\lambda= 1-\int f \land g\, d\lambda .
\end{equation}

For stochastic processes $(X_t)_{t\geq 0}$ and $(Y_t)_{t\geq 0}$ on a common probability space we define by
\begin{equation}
T(X_.,Y_.) := \inf\{t\geq 0\ :\ X_t=Y_t\}
\end{equation} 
their \textbf{coupling time}. If both processes are strong Markov processes with the same transition probabilities and if for fixed initial distributions there exists a coupling $(\widetilde{X}_.,\widetilde{Y}_.)\in \cC(X_.;Y_.)$ with
\begin{equation}
\Prob(T(\widetilde{X}_.,\widetilde{Y}_.)<\infty) = 1,
\end{equation}
or equivalent
\begin{equation}
\lim_{t\to \infty} \Prob(T(\widetilde{X}_.,\widetilde{Y}_.)> t) = 0,
\end{equation}
then this coupling is called \textbf{successful}. The process $(X_t)_{t\geq 0}$ is said to possess the \textbf{coupling property}, if a successful coupling exists for all initial distributions. In this setting, due to the strong Markov property, the coupling can be chosen such that the paths of the two processes coincide after they meet and for this coupling the well known coupling time inequality \cite[p. 35, (2.1)]{Thor2000} (also known as coupling inequality \cite[section I.2]{Lind1992} or Aldous inequality \cite{BaneKend2014})
\begin{equation} \label{eq:couplingtime}
\PP(T(\widetilde{X}_.,\widetilde{Y}_.)>t) = \Prob(\widetilde{X}_t\neq \widetilde{Y}_t) \geq \dtv(X_t,Y_t)
\end{equation}
holds for all $t\geq 0$. Moreover, for processes one calls the coupling $(\widetilde{X}^M_.,\widetilde{Y}^M_.)\in \cC(X_.;Y_.)$ \textbf{maximal coupling} if for each fixed time the coupling is maximal, i.e. by \eqref{tv_eq}
\begin{equation} \label{eq:def:maxcoupling-processes}
\dtv(X_t,Y_t) = \PP(\widetilde{X}^M_t \neq \widetilde{Y}^M_t) \text { for all $t$.}
\end{equation} 
This coupling is the \textbf{unique maximal coupling} if any maximal coupling  $(\widehat{X}^M_.,\widehat{Y}^M_.)\in \cC(X_.;Y_.)$ is distributed as $(\widetilde{X}^M_.,\widetilde{Y}^M_.)$.

Whenever a maximal coupling exists $(X_t)_{t\geq 0}$ possesses the coupling property if and only if
\begin{equation}
\dtv(X_t,Y_t) \xrightarrow{t\to \infty} 0 \text{ for all initial distributions.}
\end{equation}

We have two different definitions of maximal coupling, we use \eqref{max-coupling} for random variables and \eqref{eq:def:maxcoupling-processes} for processes. Maximal coupling as in \eqref{max-coupling} would also make sense for path valued random variables, but in this setting for processes with initial distributions with disjoint supports any coupling would be a maximal coupling since their total variation distance is 1. Nevertheless \eqref{max-coupling} can be related to \eqref{eq:def:maxcoupling-processes} by considering the shifted processes \cite[Lemma 2.3]{Kuwa2009}.

Let $(Z_t)_{t\geq 0}$ be a Markov process and let $X$ and $Y$ be copies of $Z$ with possibly different initial distributions. Note that we abuse notation here, since we omit the corresponding family of distributions, the filtration and the shift operators. We denote by $(Z_t^{x,r})_{t\geq 0}$ the process started in $x$ at time $r$, for a time-homogeneous process this simplifies to $(Z_t^{x})_{t\geq 0}$.
 A coupling $(\widetilde{X}_.,\widetilde{Y}_.)\in \cC(X_.;Y_.)$ is called \textbf{co-adapted} if $(\widetilde{X}_t)_{t\geq 0}$ and $(\widetilde{Y}_t)_{t\geq 0}$ are Markov processes with respect to the natural filtration of the joint process, i.e., $(\Fskript_t)_{t\geq 0}$ with $\Fskript_t := \sigma\{(\widetilde{X}_s,\widetilde{Y}_s)\,:\, s\leq t\})$.
It is called \textbf{Markovian} if $((\widetilde{X}_{t},\widetilde{Y}_{t}))_{t\geq 0}$ is a Markov process with respect to $(\Fskript_t)_{t\geq 0}$. Thus any Markovian coupling is also co-adapted. Note that a Markovian coupling of time-homogeneous Markov processes can be time-inhomogeneous, cf. Example \ref{ex:coupling-time-inhomo-not-strong-mp}. But a unique Markovian maximal coupling of time-homogeneous processes is time-homogeneous, cf. Theorem \ref{thm:max-coupling-time-homo}.

Hsu and Sturm \cite{HsuStur2013} showed that for Brownian motion the mirror coupling is the only maximal coupling which is also co-adapted.  
In fact they used (see also \cite{HsuStur2003}) the term mirror coupling for two properties (which are equivalent in their setting):
\begin{enumerate}
\item On the one hand one can couple the paths of spatial homogeneous and symmetric Markov processes $(X_t)_{t\geq 0}, (Y_t)_{t\geq 0}$ with continuous paths starting from $x$ and $y$, respectively, by taking, until they meet,  $Y_t$ to be $X_t$ reflected with respect to the plane which contains the midpoint of $x$ and $y$ and to which $x-y$ is normal. After hitting this plane one sets $Y_t:=X_t$.  This will be called \textbf{reflection coupling} (as in \cite{BoetSchiWang2011a}, it dates back to \cite{Lind1982,LindRoge1986}). Formally for $x\neq y$ the reflection plane is \begin{equation}
H_{x,y}:= \{z\,:\, |x-z|=|y-z|\},
\end{equation} the hitting time of the plane is $\tau_{H_{x,y}} := \inf\{t>0\,:\, X_t \in H_{x,y}\}$ and the reflection on $H_{x,y}$ is (cf. \cite[p. 1204]{BoetSchiWang2011a})
\begin{equation} \label{eqdef:reflection}
R_{x,y} z := z - 2 \langle z-\frac{x+y}{2}, x-y \rangle \frac{x-y}{|x-y|^2} \text{ for } x\neq y.
\end{equation}
(Note that $y\mapsto R_{x,y}z$ has (for $d>1$) no continuous extension to $y=x$, since on the one hand $y_1=x_1, y_2\to x_2$ yields $R_{x,x}z = (z_1,-z_2+2x_2)$ and on the other hand $y_2=x_2, y_1\to x_1$ yields $R_{x,x}z = (-z_1+2x_1,z_2)$. We set $\tau_{H_{x,x}} := 0$ and $R_{x,x} z = z$, the latter coincides in \eqref{eqdef:reflection} with the convention $\frac{0}{0} :=  0.$) Then the reflection coupling is
\begin{equation}
(X_t,(R_{x,y}^\tau X)_t) \text{ where } (R_{x,y}^\tau X)_t := \begin{cases}
R_{x,y} X_t &,\text{ if } t< \tau_{H_{x,y}},\\ X_t &, \text{ if } t\geq \tau_{H_{x,y}}.
\end{cases}
\end{equation}

\item On the other hand, let $H_{x,y}^0:=\{z\,:\,\langle z,x-y \rangle = 0\},$ thus $H_{x,y}^0$ contains the origin and it is parallel to $H_{x,y}.$ Let $f$ be a probability density on $\R^d$ which is symmetric with respect to $H_{x,y}^0$, i.e.,
\begin{equation}
f(z) = f\left(z-2\langle z,x-y\rangle \frac{x-y}{|x-y|^2}\right).
\end{equation}
Let $X$ and $Y$ be random variables with densities $f(\cdot -x)$ and $f(\cdot - y)$ with respect to the Lebesgue measure $\lambda^d$ for some $x$ and $y$, respectively. Then for $B\in\Bskript(\R^{2d})$, i.e. a Borel set $B\subset\R^{2d}$,
\begin{equation} \label{mirror-coupling-distribution}
\Prob((\widetilde{X},\widetilde{Y}) \in B) = \int \One_B(z,z) (f(z-x)\land f(z-y))\,\lambda^d(dz) + \int \One_B(z,R_{x,y}z) (f(z-x)- f(z-y))^+\, \lambda^d(dz)
\end{equation}
defines a coupling $(\widetilde{X},\widetilde{Y}) \in \cC(X;Y)$ such that
\begin{equation}
\label{mirror-coupling-ident}
\Prob(\widetilde{X}=\widetilde{Y})= \int f(z-x ) \land f(z-y) \, \lambda^d(dz)
\end{equation}
and if $\widetilde{X}\neq\widetilde{Y}$ they are mirror symmetric with respect to $H_{x,y}$, i.e.
\begin{equation} \label{mirror-coupling-mirror}
\Prob(\widetilde{Y} = R_{x,y}\widetilde{X}\ |\ \widetilde{X}\neq\widetilde{Y}) = 1.
\end{equation}
Note that \eqref{mirror-coupling-ident} and \eqref{mirror-coupling-mirror} together determine \eqref{mirror-coupling-distribution}.

We call the coupling given by \eqref{mirror-coupling-distribution} \textbf{mirror coupling} and by \eqref{mirror-coupling-ident} it follows that it is a maximal coupling. A coupling of processes is a mirror coupling if the laws for each fixed time are mirror coupled. On the one hand \eqref{mirror-coupling-distribution} can be generalized to measures without densities (by the Hahn decomposition theorem). On the other hand, if we further assume that $f(x)=p(|x|)$ and $p:\RR \to [0,\infty)$ is monotone on $[0,\infty),$ e.g. $f$ is rotationally invariant and unimodal, then
\begin{equation} \label{mirror-coupling-density}
1 - \int f(z-x ) \land f(z-y) \, \lambda^d(dz) = 2 \int_0^\frac{|x-y|}{2} \int_{\R^{d-1}} p\left(\sqrt{ z_1^2 + z_2^2 + \ldots + z_d^2}\right)\,\lambda^{d-1}(dz_2\ldots z_d)\, \lambda^1(dz_1).
\end{equation}
For $d=1$ this simplifies to
\begin{equation}
 1 - \int f(z-x ) \land f(z-y) \, \lambda^1(dz) = 2 \int_0^\frac{|x-y|}{2} p\left(z\right)\, \lambda^1(dz).
\end{equation}

\end{enumerate}
Note that reflection coupling is a statement about the path, while mirror coupling is a statement about single (fixed time) distributions. The former does not have a clear extension to jump processes, since jump processes usually do not hit the reflection plane. As we will see, the mirror coupling can be used for jump processes. Let us first follow \cite{HsuStur2013} and look at Brownian motion. 

Let $(\widetilde B^x_t,\widetilde B^y_t)_{t\geq 0}$ be the reflection coupling of $\R^d$-valued Brownian motions started in $x$ and $y$, respectively. Then (e.g. \cite[section 5]{ChenLi1989} or using the distribution of the hitting time of the half space, whose Laplace transform is known cf. \cite[Theorem 5.13]{SchiPart2014})
\begin{equation}\label{bm-couplingtime}
\Prob(T(\widetilde B_.^x,\widetilde B_.^y) > s) = 2 \int_0^\frac{|x-y|}{2} g_{s}(z)\, \lambda^1(dz)
\end{equation}
for all $s> 0$, where $g_{s}$ denotes the density of the normal distribution (on $\R$) with mean 0 and variance $s$. By \eqref{mirror-coupling-density} and \eqref{dtv-density} (see \eqref{eq:tv-subordinated} for the general case) it follows that the right hand side is the total variation distance of $B_s^x$ and $B_s^y$ and thus reflection coupling of Brownian motion is a maximal coupling. Calculating the distribution of $(\widetilde B^x_t,\widetilde B^y_t)$ yields that they are mirror coupled for each fixed $t$. Hsu and Sturm \cite{HsuStur2013} use a particular Wasserstein distance to show that the reflection coupling is the unique co-adapted maximal coupling. We will come back to this in section \ref{sec:maxsBM}.

\section{Characterization of Markovian maximal couplings} \label{sec:charmmc}

Eventually we will be interested in time-homogeneous processes, but first we give a characterization of Markovian maximal couplings in the general setting.

\begin{theorem} \label{thm:mmc-characterization}
Let $(X_t)_{t\geq 0}$ be a Markov process on $\R^d$ and let $x,y\in\R^d$, $s,t>0$, $r\geq 0$.
\begin{enumerate}
\item For any coupling $(\widetilde X^{x,r}_{t},\widetilde X^{y,r}_t)$ of $X_t^{x,r}$ and $X_t^{y,r}$ 
\begin{equation} \label{dtv-immersed-lower-bound}
\dtv(X_{t+s}^{x,r},X_{t+s}^{y,r}) \leq \int \dtv(X_s^{{x_1,t+r}}, X_s^{{y_1,t+r}})\, \Prob\left((\widetilde X^{x,r}_{t},\widetilde X^{y,r}_t)\in d(x_1, y_1)\right).
\end{equation}
\item Suppose that $(\widetilde X_{t},\widetilde X_t)_{t\geq 0}$ is a Markovian coupling of $(X_t)_{t\geq 0}$, i.e. it is a Markov process and for any starting positions $x, y\in\R^d$, $r\geq 0$ the process $((\widetilde X^{x,r}_{t},\widetilde X^{y,r}_t))_{t\geq 0}$ is a coupling of $(X_t^{x,r})_{t\geq 0}$ and $(X_t^{y,r})_{t\geq 0}$, then
\begin{equation}  \label{dtv-markovian-lower-bound}
 \int \dtv(X_s^{x_1,t+r}, X_s^{y_1,t+r})\, \Prob\left((\widetilde X_{t}^{x,r},\widetilde X_t^{y,r})\in d(x_1, y_1)\right) \leq \Prob(\widetilde X_{s+t}^{x,r} \neq \widetilde X_{s+t}^{y,r}).
\end{equation}
\item Finally, if $(\widetilde X_{t},\widetilde X_t)_{t\geq 0}$ is a Markovian maximal coupling of $(X_t)_{t\geq 0}$, i.e. the conditions in (ii) hold and for any $x, y\in \R^d,$ $r,t\geq 0$ the equality $\dtv(X_t^{x,r}, X_t^{{y,r}}) = \Prob(\widetilde X_t^{{x,r}} \neq \widetilde X_t^{{y,r}})$ holds, then
\begin{equation} \label{dtv-markovian-maximal-eq}
\dtv(X_{t+s}^{x,r},X_{t+s}^{y,r}) = \int \dtv(X_s^{x_1,t+r}, X_s^{{y_1,t+r}})\, \Prob\left((\widetilde X_{t}^{x,r},\widetilde X_t^{y,r})\in d(x_1, y_1)\right).
\end{equation}
\end{enumerate}
\end{theorem}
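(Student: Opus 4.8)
The plan is to prove (i), (ii), (iii) in order: (i) and (ii) are genuinely one-sided estimates, and (iii) is a squeezing argument that chains them with the maximality identity.

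For (i) I would decompose the transition kernel of $X$ over the intermediate elapsed time $t$ by Chapman--Kolmogorov. Using $\widetilde X^{x,r}_t\eqd X^{x,r}_t$, for every Borel set $A\subset\R^d$,
\[
\Prob(X_{t+s}^{x,r}\in A)=\int \Prob(X_s^{x_1,t+r}\in A)\,\Prob\bigl((\widetilde X^{x,r}_t,\widetilde X^{y,r}_t)\in d(x_1,y_1)\bigr),
\]
where the first marginal of the coupling at elapsed time $t$ is exactly the law of $X^{x,r}_t$, and the $y_1$-coordinate has simply been integrated out. Subtracting the analogous identity for the $y$-started process, estimating the integrand by $|\Prob(X_s^{x_1,t+r}\in A)-\Prob(X_s^{y_1,t+r}\in A)|\le \dtv(X_s^{x_1,t+r},X_s^{y_1,t+r})$, and taking the supremum over $A$ gives \eqref{dtv-immersed-lower-bound}. (One should also record that $(x_1,y_1)\mapsto\dtv(X_s^{x_1,t+r},X_s^{y_1,t+r})$ is measurable, so all the integrals are well defined.)

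For (ii) the idea is to disintegrate the joint law at elapsed time $t$. Since the coupling is a Markov process, a regular conditional distribution of the shifted pair $(\widetilde X^{x,r}_{t+\cdot},\widetilde X^{y,r}_{t+\cdot})$ given $(\widetilde X^{x,r}_t,\widetilde X^{y,r}_t)=(x_1,y_1)$ exists and coincides with the law of the coupling restarted from $(x_1,y_1)$ at time $t+r$; by the defining property of a Markovian coupling this restarted pair is itself a coupling of $(X^{x_1,t+r}_\cdot)$ and $(X^{y_1,t+r}_\cdot)$, so in particular its value at elapsed time $s$ is a coupling of $X_s^{x_1,t+r}$ and $X_s^{y_1,t+r}$. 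Applying the infimum characterisation \eqref{tv_inf} of the total variation distance on each fibre,
\[
\Prob\bigl(\widetilde X^{x,r}_{t+s}\neq \widetilde X^{y,r}_{t+s}\bigm|(\widetilde X^{x,r}_t,\widetilde X^{y,r}_t)=(x_1,y_1)\bigr)\ \ge\ \dtv(X_s^{x_1,t+r},X_s^{y_1,t+r}),
\]
and integrating against $\Prob((\widetilde X^{x,r}_t,\widetilde X^{y,r}_t)\in d(x_1,y_1))$ yields \eqref{dtv-markovian-lower-bound}.

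Part (iii) is then immediate: chaining \eqref{dtv-immersed-lower-bound}, \eqref{dtv-markovian-lower-bound} and the maximality identity $\Prob(\widetilde X^{x,r}_{t+s}\neq\widetilde X^{y,r}_{t+s})=\dtv(X_{t+s}^{x,r},X_{t+s}^{y,r})$ produces a chain of inequalities beginning and ending at $\dtv(X_{t+s}^{x,r},X_{t+s}^{y,r})$, so every one of them must be an equality; in particular \eqref{dtv-markovian-maximal-eq} holds. (As a byproduct, the disintegration kernel is, for $\Prob$-a.e.\ $(x_1,y_1)$, itself a maximal coupling of $X_s^{x_1,t+r}$ and $X_s^{y_1,t+r}$.) I expect the main obstacle to be step (ii): one must check carefully that conditioning the joint Markov process on its time-$t$ position really returns a coupling of the \emph{correctly restarted} marginal laws, which is where the hypothesis that the joint process restarts as a coupling from every time $r\ge0$ (not merely co-adaptedness from time $0$) is used. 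Granted that, the existence of the regular conditional distribution on $\R^{2d}$ and the fibrewise application of \eqref{tv_inf}, like the measurability bookkeeping in (i), are routine.
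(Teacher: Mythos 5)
Your proposal is correct and follows essentially the same route as the paper: part (i) is the identical Chapman--Kolmogorov plus supremum estimate, and part (ii) is the paper's direct argument via the Markov property of the coupling (disintegration at time $t$) together with the infimum characterisation of total variation. The only cosmetic difference is in (iii), where you close the loop by sandwiching the chain of inequalities against maximality at time $t+s$, while the paper substitutes the maximality identity directly into the two-step disintegration; both are immediate from the same ingredients.
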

\begin{proof}
Let $(X_t)_{t\geq 0}$ be a Markov process and $(\widetilde X^{x,r}_{t},\widetilde X^{y,r}_t)\in \cC(X_t^{x,r};X_t^{y,r})$. Then, by the Markov property, 
\begin{equation}
\begin{split}
\dtv(X_{t+s}^{x,r}, X_{t+s}^{y,r}) & = \sup_{A\in \Bskript(\R^d)} \left| \Prob( X_{t+s}^{x,r} \in A) - \Prob( X_{t+s}^{y,r} \in A)\right|\\
 & = \sup_{A\in \Bskript(\R^d)} \left| \int (\Prob( X_{s}^{x_1,t+r} \in A) - \Prob( X_{s}^{y_1,t+r} \in A))\, \Prob((\widetilde X_t^{x,r}, \widetilde X_t^{y,r}) \in d(x_1,y_1))\right|\\
 & \leq \int \sup_{A\in \Bskript(\R^d)} \left|\Prob( X_{s}^{x_1,t+r} \in A) - \Prob( X_{s}^{y_1,t+r} \in A)\right|\, \Prob((\widetilde  X_t^{x,r},\widetilde X_t^{y,r}) \in d(x_1,y_1))\\
& =  \int \dtv(X_s^{{x_1,t+r}}, X_s^{{y_1,t+r}})\, \Prob\left((\widetilde X_{t}^{x,r},\widetilde X_t^{y,r})\in d(x_1, y_1)\right),
\end{split}
\end{equation}
i.e. \eqref{dtv-immersed-lower-bound} holds. 

The inequality \eqref{dtv-markovian-lower-bound} can be proved using the coupling time (cf. \cite[Lemma 3.3]{Kuwa2009}) or directly using the Markov property and \eqref{eq:couplingtime}:
\begin{equation} \label{eq:dtv-proof-lower-bound}
\begin{split}
\Prob(\widetilde X_{t+s}^{x,r} \neq \widetilde X_{t+s}^{y,r}) & = \int \Prob(\widetilde X_{s}^{x_1,t+r} \neq \widetilde X_{s}^{y_1,t+r}) \, \Prob\left((\widetilde X_t^{x,r},\widetilde X_t^{y,r})\in d(x_1, y_1)\right)\\
&\geq \int \dtv(X_{s}^{{x_1,t+r}}, X_{s}^{{y_1,t+r}})\, \Prob\left((\widetilde X_t^{x,r},\widetilde X_t^{y,r})\in d(x_1, y_1)\right).
\end{split}
\end{equation}

From the first line of \eqref{eq:dtv-proof-lower-bound} follows \eqref{dtv-markovian-maximal-eq} directly using the definition of maximal coupling \eqref{tv_eq}. 
\end{proof}

In fact \eqref{dtv-markovian-maximal-eq} yields a characterization of Markovian maximal couplings.

\begin{corollary}Let $(X_t)_{t\geq 0}$ be a Markov process. For any $x,$ $y$ and $r\geq 0$ the mapping $t\mapsto \dtv(X^{x,r}_t,X^{y,r}_t)$ be continuous and the process $(\widetilde X^{x,r}_{t},\widetilde X^{y,r}_t)_{t\geq 0}$ be a Markovian coupling of $(X^{x,r}_t)_{t\geq 0}$ and $(X^{y,r}_t)_{t\geq 0}$ satisfying \eqref{dtv-markovian-maximal-eq}. Then it is a Markovian maximal coupling. 
\end{corollary}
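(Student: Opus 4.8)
The corollary is a converse to part (iii) of the theorem: we are handed a Markovian coupling $(\widetilde X^{x,r}_t,\widetilde X^{y,r}_t)$ satisfying the identity \eqref{dtv-markovian-maximal-eq} for all $x,y,r,s,t$, plus continuity of $t\mapsto \dtv(X^{x,r}_t,X^{y,r}_t)$, and we must show it is maximal, i.e.\ $\dtv(X^{x,r}_t,X^{y,r}_t)=\Prob(\widetilde X^{x,r}_t\neq\widetilde X^{y,r}_t)$ for every $t\geq 0$ (and every $x,y,r$). The strategy is to compare the given coupling against the generic lower bound from part (i) together with a matching upper bound, and deduce that equality must propagate.

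First I would fix $x,y,r$ and set $\phi(t):=\Prob(\widetilde X^{x,r}_t\neq\widetilde X^{y,r}_t)$ and $\delta(t):=\dtv(X^{x,r}_t,X^{y,r}_t)$; by the coupling inequality \eqref{eq:couplingtime} (applied to the co-adapted components of this Markovian coupling, using the Markov property as in the proof of part (ii)) one always has $\phi(t)\geq\delta(t)$, so it suffices to prove $\phi(t)\leq\delta(t)$. Now rewrite $\phi(t+s)$ by conditioning at time $t$: $\phi(t+s)=\int\Prob(\widetilde X^{x_1,t+r}_s\neq\widetilde X^{y_1,t+r}_s)\,\Prob((\widetilde X^{x,r}_t,\widetilde X^{y,r}_t)\in d(x_1,y_1))$, which is $\geq\int\delta_{t+r,s}(x_1,y_1)\,\Prob((\widetilde X^{x,r}_t,\widetilde X^{y,r}_t)\in d(x_1,y_1))$, where $\delta_{t+r,s}(x_1,y_1):=\dtv(X^{x_1,t+r}_s,X^{y_1,t+r}_s)$. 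But by hypothesis \eqref{dtv-markovian-maximal-eq} the right-hand integral equals exactly $\delta(t+s)$. Hence, using $\phi(t+s)\geq\delta(t+s)$ from the coupling inequality, the integrand inequality $\Prob(\widetilde X^{x_1,t+r}_s\neq\widetilde X^{y_1,t+r}_s)\geq\delta_{t+r,s}(x_1,y_1)$ must be an equality for $\Prob((\widetilde X^{x,r}_t,\widetilde X^{y,r}_t)\in\cdot)$-almost every $(x_1,y_1)$. This says the coupling is "maximal after time $t$, started from a.e.\ state it occupies at time $t$," but I still need to turn this into maximality at $t$ itself.

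To close the loop I would let $s\downarrow 0$. As $s\to 0$ we have $\phi(t+s)\to\phi(t)$? — not obviously, since $\phi$ need not be continuous; here is where continuity of $\delta$ enters. From $\delta(t+s)=\int\delta_{t+r,s}(x_1,y_1)\,\mu_t(d(x_1,y_1))$ with $\mu_t$ the law of $(\widetilde X^{x,r}_t,\widetilde X^{y,r}_t)$, and $\delta_{t+r,s}(x_1,y_1)\le\One\{x_1\ne y_1\}$ (total variation of two copies started at the same point is $0$), one gets $\delta(t+s)\le\mu_t(\{x_1\ne y_1\})=\phi(t)$. Letting $s\downarrow 0$ and invoking continuity of $\delta$ gives $\delta(t)\le\phi(t)$ — which we already knew — so I instead need the reverse estimate: I would bound $\phi(t)$ from above. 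Conditioning $\phi(t)$ at a slightly earlier time $t-s$ (for $0<s<t$) and applying \eqref{dtv-markovian-maximal-eq} with the roles $(t-s)+s=t$, together with the a.e.\ equality established in the previous paragraph (now at time $t-s$), yields $\phi(t)=\int\Prob(\widetilde X^{x_1,t-s+r}_s\ne\widetilde X^{y_1,t-s+r}_s)\,\mu_{t-s}(d(x_1,y_1))=\int\delta_{t-s+r,s}(x_1,y_1)\,\mu_{t-s}(d(x_1,y_1))=\delta(t)$, finishing the $t>0$ case; the case $t=0$ is trivial since the initial laws $\delta_{x}$ and $\delta_y$ (Dirac) force both sides to equal $\One\{x\ne y\}$, and then a limit $t\downarrow 0$ plus continuity of $\delta$ is not even needed. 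I would also remark the argument is genuinely a statement for all $x,y,r$ simultaneously, since \eqref{dtv-markovian-maximal-eq} is assumed for all of them.

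The main obstacle I anticipate is the bootstrapping in the last paragraph: the identity \eqref{dtv-markovian-maximal-eq} only directly controls $\phi$ at times of the form $t+s$ with $s>0$ in terms of behaviour \emph{after} time $t$, so to get maximality \emph{at} a given time $t$ one must either (a) run the decomposition from an earlier time and propagate the a.e.-equality forward, which requires knowing the a.e.-equality holds at times approaching $t$ from below and then passing a limit, or (b) find a self-improving argument showing the "defect" $\phi(t)-\delta(t)\ge 0$ satisfies an integral inequality forcing it to vanish. Option (a) needs the continuity hypothesis on $\delta$ precisely to rule out the defect concentrating on a null set of times; making the measurability/regularity of $(x_1,y_1)\mapsto\delta_{t+r,s}(x_1,y_1)$ and of the defect function rigorous (so that "a.e.\ equality of integrands" is legitimate) is the technical heart, and I would handle it by noting these are Borel functions bounded by $1$ and using a monotone-class / dominated-convergence argument as $s\downarrow0$.
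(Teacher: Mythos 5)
Your argument has a genuine gap at the end of your second paragraph, where you claim that the integrand inequality $\Prob(\widetilde X^{x_1,t+r}_s\neq\widetilde X^{y_1,t+r}_s)\geq \dtv(X_s^{x_1,t+r},X_s^{y_1,t+r})$ must be an equality for $\mu_t$-a.e.\ $(x_1,y_1)$ (writing $\mu_t$ for the law of $(\widetilde X_t^{x,r},\widetilde X_t^{y,r})$, as you do). What you actually have at that point is $\Prob(\widetilde X^{x,r}_{t+s}\neq\widetilde X^{y,r}_{t+s})\geq \int \dtv(X_s^{x_1,t+r},X_s^{y_1,t+r})\,\mu_t(d(x_1,y_1))=\dtv(X^{x,r}_{t+s},X^{y,r}_{t+s})$ and, separately, the coupling inequality $\Prob(\widetilde X^{x,r}_{t+s}\neq\widetilde X^{y,r}_{t+s})\geq \dtv(X^{x,r}_{t+s},X^{y,r}_{t+s})$; these are the same one-sided bound and neither squeezes the integral against the larger quantity. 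To force a.e.\ equality of the integrands you would need the \emph{reverse} inequality $\Prob(\widetilde X^{x,r}_{t+s}\neq\widetilde X^{y,r}_{t+s})\leq \dtv(X^{x,r}_{t+s},X^{y,r}_{t+s})$, i.e.\ maximality at time $t+s$ --- exactly what is to be proved, so the step is circular. Since the bootstrap in your last paragraph (conditioning at $t-s$ and invoking ``the a.e.\ equality established in the previous paragraph'') rests on this, the whole argument collapses and $\phi(t)=\delta(t)$ is never obtained for $t>0$.

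The intended proof is a one-step limit, and you already had the ingredients: fix $t$ and let $s\downarrow 0$ in \eqref{dtv-markovian-maximal-eq}. You only used the bound $\dtv(X_s^{x_1,t+r},X_s^{y_1,t+r})\leq\One_{\{x_1\neq y_1\}}$, but the continuity hypothesis, applied with starting points $x_1,y_1$ and starting time $t+r$, gives the matching pointwise limit $\dtv(X_s^{x_1,t+r},X_s^{y_1,t+r})\to \dtv(X_0^{x_1,t+r},X_0^{y_1,t+r})=\One_{\{x_1\neq y_1\}}$ as $s\downarrow 0$. Dominated convergence (the integrand is bounded by $1$) then sends the right-hand side of \eqref{dtv-markovian-maximal-eq} to $\mu_t(\{x_1\neq y_1\})=\Prob(\widetilde X_t^{x,r}\neq\widetilde X_t^{y,r})$, while continuity of $u\mapsto\dtv(X_u^{x,r},X_u^{y,r})$ at $u=t$ sends the left-hand side to $\dtv(X_t^{x,r},X_t^{y,r})$. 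Hence $\dtv(X_t^{x,r},X_t^{y,r})=\Prob(\widetilde X_t^{x,r}\neq\widetilde X_t^{y,r})$ for all $t$, which is the definition of maximality; no coupling inequality, a.e.\ argument, or backward bootstrapping from $t-s$ is needed. This is precisely the paper's (terse) proof.
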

\begin{proof}
Letting $s$ tend to $0$ in \eqref{dtv-markovian-maximal-eq} yields
\begin{equation}
\dtv(X_{t}^{x,r},X_{t}^{y,r}) = \Prob (\widetilde X_{t}^{x,r} \neq \widetilde X_t^{y,r}).
\end{equation} 
Thus the coupling is maximal.
\end{proof}

Furthermore, for time-homogeneous processes one gets the following.

\begin{theorem} \label{thm:max-coupling-time-homo} Let $(X_t)_{t\geq 0}$ be a time-homogeneous Markov process. If $(\widetilde X_{t},\widetilde X_t)_{t\geq 0}$ is a Markovian maximal coupling of $(X_t)_{t\geq 0}$ then 
\begin{equation} \label{dtv-markovian-maximal-eq-th-tih}
\dtv(X_{t+s}^x,X_{t+s}^y) = \int \dtv(X_s^{x_1}, X_s^{{y_1}})\, \Prob\left((\widetilde X_{t}^{x,r},\widetilde X_t^{y,r})\in d(x_1, y_1)\right) \text{ for all $r\geq 0$,}
\end{equation}
and if it is unique, i.e. \eqref{dtv-markovian-maximal-eq-th-tih} determines the distribution of $(\widetilde X_{t}^{x,r},\widetilde X_t^{y,r})$ uniquely, then the Markovian maximal coupling is time-homogeneous and
\begin{equation} \label{dtv-markovian-maximal-eq-th}
\dtv(X_{t+s}^x,X_{t+s}^y) = \int \dtv(X_s^{x_1}, X_s^{{y_1}})\, \Prob\left((\widetilde X_{t}^{x},\widetilde X_t^{y})\in d(x_1, y_1)\right).
\end{equation}
\end{theorem}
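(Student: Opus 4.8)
The plan is to obtain \eqref{dtv-markovian-maximal-eq-th-tih} as a direct specialisation of Theorem~\ref{thm:mmc-characterization}(iii). That theorem gives \eqref{dtv-markovian-maximal-eq} for the given Markovian maximal coupling and for all $x,y\in\RR^d$, $r\geq 0$, $s,t>0$. Since $(X_t)_{t\geq 0}$ is time-homogeneous, the law of $X_u^{z,q}$ does not depend on the starting time $q$, so $\dtv(X_{t+s}^{x,r},X_{t+s}^{y,r})=\dtv(X_{t+s}^{x},X_{t+s}^{y})$ and $\dtv(X_s^{x_1,t+r},X_s^{y_1,t+r})=\dtv(X_s^{x_1},X_s^{y_1})$ for every $r$. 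Substituting these identifications into \eqref{dtv-markovian-maximal-eq} turns it into \eqref{dtv-markovian-maximal-eq-th-tih}, valid for all $r\geq 0$; no genuine computation is involved here.

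For the uniqueness statement, observe that the previous paragraph has, for each fixed $r\geq 0$, exhibited a solution of \eqref{dtv-markovian-maximal-eq-th-tih}: the one-dimensional marginal law $t\mapsto\Prob\big((\widetilde X_t^{x,r},\widetilde X_t^{y,r})\in\,\cdot\,\big)$. (Conceptually this is because restarting the joint Markov process $(\widetilde X_t,\widetilde X_t)_{t\geq 0}$ from $(x,y)$ at time $r$ again produces a Markovian maximal coupling of $(X_t^x)_{t\geq 0}$ and $(X_t^y)_{t\geq 0}$: it is Markov by restriction, it is a coupling because time-homogeneity identifies its components with $(X_t^x)$ and $(X_t^y)$, and it is maximal because the defining identity in Theorem~\ref{thm:mmc-characterization}(iii) is demanded for every starting time.) Now the left-hand side of \eqref{dtv-markovian-maximal-eq-th-tih} and the integrand $\dtv(X_s^{x_1},X_s^{y_1})$ do not involve $r$, so the hypothesised uniqueness of the solution forces $\Prob\big((\widetilde X_t^{x,r},\widetilde X_t^{y,r})\in\,\cdot\,\big)$ to be the same probability measure for every $r\geq 0$; denote it $\Prob\big((\widetilde X_t^{x},\widetilde X_t^{y})\in\,\cdot\,\big)$.

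Finally, I would translate this into time-homogeneity of the coupling. For a Markov process the law of the path restarted from a state $(x,y)$ at time $r$ is governed by the transition function, and its one-dimensional marginal at elapsed time $t$ is precisely the transition probability from $(x,y)$ over the time lag $t$, i.e. $\Prob\big((\widetilde X_t^{x,r},\widetilde X_t^{y,r})\in\,\cdot\,\big)$; since this has just been shown to be independent of $r$, the process $(\widetilde X_t,\widetilde X_t)_{t\geq 0}$ is time-homogeneous. Equation \eqref{dtv-markovian-maximal-eq-th} is then nothing but \eqref{dtv-markovian-maximal-eq-th-tih} with $\Prob\big((\widetilde X_t^{x,r},\widetilde X_t^{y,r})\in d(x_1,y_1)\big)$ replaced by the $r$-independent measure $\Prob\big((\widetilde X_t^{x},\widetilde X_t^{y})\in d(x_1,y_1)\big)$. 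I expect the only real obstacle to be making precise what the uniqueness hypothesis asserts and verifying that the restarted object genuinely qualifies as a Markovian maximal coupling, so that uniqueness may be invoked for it; once that is in place --- and once one is careful about the elapsed-time versus absolute-time meaning of the superscript notation $X_t^{x,r}$ --- the rest is routine bookkeeping.
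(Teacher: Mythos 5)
Your proposal is correct and follows essentially the same route as the paper: specialise Theorem~\ref{thm:mmc-characterization}(iii), use time-homogeneity of the marginals to drop the starting-time dependence of the total variation distances, and then invoke the uniqueness hypothesis to conclude that the law of $(\widetilde X_t^{x,r},\widetilde X_t^{y,r})$ cannot depend on $r$, giving \eqref{dtv-markovian-maximal-eq-th}. Your extra remarks on why the restarted coupling qualifies and why $r$-independence of the transition kernel yields time-homogeneity only spell out steps the paper leaves implicit.
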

\begin{proof} Let $(\widetilde X_{t},\widetilde X_t)_{t\geq 0}$ be a Markovian maximal coupling of $(X_t)_{t\geq 0}$. Since the marginals are time-homogeneous 
\begin{equation}
\dtv(X_{t}^{x,r},X_{t}^{y,r}) = \dtv(X_{t}^{x,0},X_{t}^{y,0}) = \dtv(X_{t}^{x},X_{t}^{y})
\end{equation}
holds for all $t,r\geq 0$ and  $x,y\in \R^d$ and thus \eqref{dtv-markovian-maximal-eq} simplifies to  \eqref{dtv-markovian-maximal-eq-th-tih}. If the coupling is unique, the distribution of $(\widetilde X_{t}^{x,r},\widetilde X_t^{y,r})$ cannot depend on $r$. Thus \eqref{dtv-markovian-maximal-eq-th-tih} becomes \eqref{dtv-markovian-maximal-eq-th}.
\end{proof}

Note that for a distance $\rho$
\begin{equation} \label{eq:wasserst}
 W_{\rho}(X,Y) := \inf_{(\widetilde X,\widetilde Y) \in \cC(X,Y)}\int \rho(x_1, y_1)\, \Prob\left((\widetilde X,\widetilde Y)\in d(x_1, y_1)\right)
\end{equation}
defines a Wasserstein (Vasershtein) type distance, cf. \cite[Chapter 7]{Vill2003}. In particular, with $\rho(x,y) = 1$ if $x\neq y$ and $\rho(x,y)=0$ if $x=y$ one gets $\dtv(X,Y) = W_\rho(X,Y)$, cf. \eqref{tv_inf}. This could be used to rewrite the proof of Theorem \ref{thm:mmc-characterization}. But note that in this case the minimizer in \eqref{eq:wasserst} is not unique. In general, conditions for the uniqueness of the minimizer  are known, e.g. if $\rho$ is of the form $\rho(x,y)=f(|x-y|)$ for a strictly concave function $f,$ cf. \cite{GangMcCa1996}. In \eqref{dtv-immersed-lower-bound} a distance of Wasserstein type appears and a lower bound is provided, which is attained for the Markovian maximal coupling (if it exists). This is the key idea for the uniqueness result in the next section.

\section{Maximal coupling of subordinated Brownian motions} \label{sec:maxsBM}

For readers not familiar with L\'evy processes and subordination the basic definitions and results can be found in the appendix (Section \ref{appendix}). In \cite{BoetSchiWang2011a} subordinated Brownian motions were coupled by subordinating reflection coupled Brownian motions. The coupling therein was used to obtain estimates of the total variation distance of the distributions of the processes as time $t \to \infty.$ The key estimate is (\cite[p. 1210]{BoetSchiWang2011a}): 
\begin{equation} \label{eq:tv-subordinator-bound}
\dtv(L_t^x,L_t^y) \leq \int_{[0,\infty)} \Prob(T(\widetilde B_.^x,\widetilde B_.^y)> s)\, \Prob(S_t \in ds)
\end{equation}
where $(L_t^z)_{t\geq 0}$ is a $\R^d$-valued subordinated Brownian motion started at $z$, $(\widetilde B_t^x,\widetilde B_t^y)_{t\geq 0}$ is the reflection coupling of $\R^d$-valued Brownian motions (started at $x$ and $y$, respectively) and $(S_t)_{t\geq 0}$ is a subordinator such that $L_.^z \eqd B_{S_.}^z$ for $z\in\{x,y\}$. We call $(\widetilde B_{S_t}^x,\widetilde B_{S_t}^y)_{t\geq 0}$ \textbf{subordinated reflection coupled Brownian motion}. Note that the subordinated Brownian motion has transition probabilities of the form 
\begin{equation}
\Prob(B_{S_t}^z-B_{S_s}^z \in dx) = p_{t-s}(|x|) dx + \Prob(S_{t-s}=0) \delta_0(dx)
\end{equation} where $p_t(|\cdot|)$ is a (sub probability) transition density and $p_t$ is monotone on $(0,\infty)$ since 
\begin{equation} \label{eq:sub-bm-density}
p_t(r) = \int_{(0,\infty)} \frac{1}{(2\pi)^\frac{d-1}{2}} \frac{1}{\sqrt{2\pi s}} e^{-\frac{r^2}{2s}}\, \Prob(S_t\in ds)
\end{equation} has derivative $p_t'(r) < 0$ for $r>0$.   

By construction $(\widetilde B_{S_.}^x,\widetilde B_{S_.}^y)$ is a Markovian coupling. The next Lemma shows that in \eqref{eq:tv-subordinator-bound} in fact equality holds and it implies that for subordinated Brownian motions the subordination of reflection coupled Brownian motions is a maximal coupling (by \eqref{eq:couplingtime} and \eqref{eq:def:maxcoupling-processes}). 

\begin{lemma}
In the above setting and notation we have
\begin{equation} \label{eq:tv-subordinated}
\dtv(L_t^x,L_t^y) = \Prob(S_t =0)(1-\delta_{x}(y)) + 2 (2\pi)^\frac{d-1}{2} \int_0^\frac{|x-y|}{2} p_t(r)\, dr = \Prob(T(\widetilde B_{S_.}^x,\widetilde B_{S_.}^y) > t). 
\end{equation}
\end{lemma}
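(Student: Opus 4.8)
The plan is to prove the three claimed equalities in \eqref{eq:tv-subordinated} by establishing them as a chain. First, I would compute the total variation distance $\dtv(L_t^x,L_t^y)$ directly from the explicit form of the transition law of the subordinated Brownian motion. The key point is that the law of $L_t^z$ decomposes as a point mass $\Prob(S_t=0)\,\delta_z$ plus an absolutely continuous part with density $p_t(|\cdot - z|)$. Since $L_t^x$ and $L_t^y$ share the same subordinator, both have the same atom weight $\Prob(S_t=0)$, placed at $x$ and at $y$ respectively. If $x\neq y$, these atoms are disjoint, so each contributes $\Prob(S_t=0)$ to the total variation; if $x=y$ the whole distance is $0$, which the factor $(1-\delta_x(y))$ records. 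For the absolutely continuous parts I would use the formula \eqref{dtv-density}, namely $\dtv = \int (f-g)^+\,d\lambda^d$, and then invoke \eqref{mirror-coupling-density} applied to $f = p_t(|\cdot|)$ — which is legitimate because, as noted just above the Lemma in \eqref{eq:sub-bm-density}, $p_t$ is monotone (decreasing) on $(0,\infty)$, so $f$ is rotationally invariant and unimodal. This yields exactly the term $2(2\pi)^{(d-1)/2}\int_0^{|x-y|/2} p_t(r)\,dr$, after identifying the inner $(d-1)$-dimensional Gaussian-type integral in \eqref{mirror-coupling-density} with the constant $(2\pi)^{(d-1)/2}$ times $p_t$ using \eqref{eq:sub-bm-density} and Fubini (the $s$-integral and the spatial integrals commute). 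A small care point: one must check that the atoms do not interact with the absolutely continuous part in the Hahn/Jordan decomposition of $\Prob_{L_t^x} - \Prob_{L_t^y}$, which is immediate since the atom of $L_t^x$ sits at $x$ where the density part of $L_t^y$ only contributes Lebesgue-a.e. mass, so the positive and negative parts of the signed measure split cleanly into the atomic piece and the density piece.

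Next, for the coupling-time identity on the right, I would start from \eqref{bm-couplingtime}, the known law of the coupling time $T(\widetilde B_.^x,\widetilde B_.^y)$ of reflection-coupled Brownian motions, and condition on the subordinator. Conditionally on $(S_u)_{u\le t}$, the pair $(\widetilde B_{S_u}^x,\widetilde B_{S_u}^y)$ is a time-changed version of the reflection coupling, and it has met by time $t$ precisely when the original reflection coupling has met by time $S_t$ — here the monotonicity of $u\mapsto S_u$ is what makes "$\{T\le S_t\}$" the correct event, and once the reflection coupling meets it stays together, so the subordinated coupling meets at the subordinated hitting time and stays together too. Therefore $\Prob(T(\widetilde B_{S_.}^x,\widetilde B_{S_.}^y)>t) = \Prob(T(\widetilde B_.^x,\widetilde B_.^y) > S_t)$. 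Then I would split on $\{S_t=0\}$ (where the reflection coupling has run for zero time, so the processes have met iff $x=y$, giving the $\Prob(S_t=0)(1-\delta_x(y))$ term) and on $\{S_t>0\}$, where I plug in \eqref{bm-couplingtime} with $s=S_t$ and integrate: $\int_{(0,\infty)} 2\int_0^{|x-y|/2} g_s(z)\,\lambda^1(dz)\,\Prob(S_t\in ds)$. Interchanging the integrals by Fubini (everything is nonnegative) and using $g_s(z) = \frac{1}{\sqrt{2\pi s}}e^{-z^2/(2s)}$ together with \eqref{eq:sub-bm-density} gives $2(2\pi)^{(d-1)/2}\int_0^{|x-y|/2} p_t(r)\,dr$, matching the middle expression. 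This is really the same computation as \eqref{eq:tv-subordinator-bound}, just carried out as an equality once one knows the $T$-distribution \eqref{bm-couplingtime} exactly rather than just a bound.

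Putting the two computations side by side establishes all the equalities in \eqref{eq:tv-subordinated}. The main obstacle, modest as it is, is the careful bookkeeping of the atomic part: one must be sure that when $\Prob(S_t=0)>0$ (the genuinely new feature compared to pure Brownian motion) the atoms at $x$ and $y$ are disentangled from the continuous densities in the total variation computation, and simultaneously that the coupling-time identity correctly reports "not yet coupled" on $\{S_t=0, x\neq y\}$ and "coupled" on $\{S_t=0, x=y\}$ — i.e. that the convention $\tau_{H_{x,x}}=0$ from the reflection-coupling setup is consistent with $T(\widetilde B_{S_.}^x,\widetilde B_{S_.}^y)=0$ when $x=y$. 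Everything else — the application of \eqref{mirror-coupling-density}, the Fubini interchanges, the identification of the Gaussian normalizing constants — is routine. It may also be worth remarking explicitly that the monotonicity hypothesis needed for \eqref{mirror-coupling-density} is exactly what was verified via $p_t'(r)<0$ in \eqref{eq:sub-bm-density}, so the Lemma's hypotheses dovetail with the preceding discussion with nothing left to check.
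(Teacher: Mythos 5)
Your proposal is correct and follows essentially the same route as the paper: decompose the law of $L_t^z$ into the atom $\Prob(S_t=0)\delta_z$ plus the density $p_t(|\cdot-z|)$, handle the atoms via the Hahn decomposition (the paper's first technical observation), apply the subprobability version of \eqref{mirror-coupling-density} together with the $(d-1)$-dimensional integral identity from \eqref{eq:sub-bm-density}, and then identify the result with $\int_{[0,\infty)}\Prob(T(\widetilde B_.^x,\widetilde B_.^y)>s)\,\Prob(S_t\in ds)=\Prob(T(\widetilde B_{S_.}^x,\widetilde B_{S_.}^y)>t)$ via \eqref{bm-couplingtime}, Fubini, and the monotonicity of the subordinator. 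The only difference is organizational (you meet the middle expression from both ends rather than running one chain of equalities, and you spell out the time-change argument for the coupling time that the paper leaves implicit), which does not change the substance.
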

\begin{proof} First note that for $x=y$ the equality holds since each term is equal to zero. Thus it remains to consider the case $x\neq y$:

We need three technical observations: 1. If $X$ and $Y$ have distributions of the form $f(z)dz + c \delta_a(dz)$ and $g(z)dz + c \delta_b(dz)$ for $c\in(0,1)$ and some $a,b$ then
\begin{equation}
\dtv(X,Y) = (1-c) - \int f(z)\land g(z)\, dz + c(1-\delta_a(b))
\end{equation}
and for $a\neq b$ this becomes
\begin{equation} \label{dtv-subprob}
\dtv(X,Y) = 1 - \int f(z)\land g(z)\, dz.
\end{equation}
2. In the setting of subprobability densities Equation \eqref{mirror-coupling-density} remains valid, if one replaces the leading one by the total mass of the subprobability density. \\
3. Observe that
\begin{equation}
 \int_{\R^{d-1}} p_t\left(\sqrt{ z_1^2 + z_2^2 + \ldots + z_d^2}\right)\,\lambda^{d-1}(dz_2\ldots z_d) =  (2\pi)^\frac{d-1}{2} p_t(z_1).
\end{equation}
Thus by \eqref{dtv-subprob}, \eqref{mirror-coupling-density} and \eqref{bm-couplingtime} we find
\begin{equation} \label{eq:tv-subordinated}
\begin{split}
\dtv(L_t^x,L_t^y) &= \Prob(S_t = 0) + 2 \int_0^\frac{|x-y|}{2} \int_{\R^{d-1}} p_t\left(\sqrt{ z_1^2 + z_2^2 + \ldots + z_d^2}\right)\,\lambda^{d-1}(dz_2\ldots z_d)\, \lambda^1(dz_1)\\
&= \Prob(S_t = 0) +  2 \int_0^\frac{|x-y|}{2} (2\pi)^\frac{d-1}{2} p_t\left(z_1\right)\, \lambda^1(dz_1)\\
&= \Prob(S_t = 0) + \int_0^\frac{|x-y|}{2} 2 \int_{(0,\infty)} g_s(r)\, \Prob(S_t\in ds)\, dr\\
&= \int_{[0,\infty)}  2 \int_0^\frac{|x-y|}{2} g_s(r)\, dr\,\Prob(S_t\in ds)\\
&= \int_{[0,\infty)} \Prob(T(\widetilde B_.^x,\widetilde B_.^y) > s)\,\Prob(S_t\in ds)\\
&= \Prob(T(\widetilde B_{S_.}^x,\widetilde B_{S_.}^y) > t). \qedhere
\end{split}
\end{equation}
\end{proof}

In particular the above shows that $\dtv(B_s^x,B_s^y) = 2 \int_0^\frac{|x-y|}{2} g_{s}(z)\, dz$, where $g_{s}$ is the density of the normal distribution with mean 0 and variance $s$, cf. \eqref{bm-couplingtime}. This together with the uniqueness condition for \eqref{eq:wasserst} was used in \cite{HsuStur2003} to show that for Brownian motion the mirror coupling is the unique Markovian maximal coupling. In our setting it is a special case of the following result.

\begin{theorem} \label{thm:mmc-subordinated-bm-uniqueness} For subordinated Brownian motion the subordinated reflection coupled Brownian motion is the unique Markovian maximal coupling.
\end{theorem}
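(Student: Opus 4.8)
The plan is to combine the fixed-time characterization \eqref{dtv-markovian-maximal-eq-th} from Theorem \ref{thm:max-coupling-time-homo} with an optimal-transport uniqueness argument in the spirit of \cite{HsuStur2003}. Let $(\widehat X^x_t,\widehat X^y_t)_{t\geq 0}$ be an arbitrary Markovian maximal coupling of the subordinated Brownian motion $(L_t)_{t\geq 0}$; by the preceding Lemma we already know that the subordinated reflection coupling $(\widetilde B^x_{S_\cdot},\widetilde B^y_{S_\cdot})$ is one. By Theorem \ref{thm:max-coupling-time-homo} any Markovian maximal coupling satisfies, for all $x\neq y$, $s,t>0$,
\begin{equation} \label{eq:thm-uniq-key}
\dtv(L^x_{t+s},L^y_{t+s}) = \int \dtv(L^{x_1}_s,L^{y_1}_s)\, \Prob\bigl((\widehat X^x_t,\widehat X^y_t)\in d(x_1,y_1)\bigr).
\end{equation}
Because the right-hand side also equals $\Prob(\widehat X^x_t\neq\widehat X^y_t)$ (maximality) plus an average of the strictly positive continuous densities, the coupling measure $\mu_t^{x,y}:=\Prob((\widehat X^x_t,\widehat X^y_t)\in\cdot)$ is, for each $t$, a transport plan between $L^x_t$ and $L^y_t$ that achieves the infimum
\[
W_\rho(L^x_t,L^y_t)\;=\;\inf_{(\widetilde X,\widetilde Y)\in\cC(L^x_t;L^y_t)}\int \rho(x_1,y_1)\,\Prob\bigl((\widetilde X,\widetilde Y)\in d(x_1,y_1)\bigr),
\qquad \rho(u,v):=\dtv(L^u_s,L^v_s).
\]
Here $s>0$ is an auxiliary parameter at our disposal. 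The content of \eqref{eq:thm-uniq-key} is exactly that both $\mu_{t+s}^{x,y}$-average of the cost and $\mu_t^{x,y}$ realise $W_\rho$; I would phrase the argument so that $\mu_t^{x,y}$ is an optimal plan for the cost $\rho$.

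The next step is to show that this cost $\rho$ is of the form $\rho(u,v)=f_s(|u-v|)$ with $f_s$ strictly concave on $[0,\infty)$, so that the Gangbo--McCann uniqueness theorem \cite{GangMcCa1996} applies and the optimal plan between any two measures with densities is unique (and moreover is induced by a map). From the Lemma,
\[
\dtv(L^u_s,L^v_s)=\Prob(S_s=0)\,(1-\delta_u(v))+2(2\pi)^{(d-1)/2}\int_0^{|u-v|/2}p_s(r)\,dr
\]
for $u\neq v$, so on $(0,\infty)$ one has $f_s(\ell)=\Prob(S_s=0)+2(2\pi)^{(d-1)/2}\int_0^{\ell/2}p_s(r)\,dr$; its derivative $(2\pi)^{(d-1)/2}p_s(\ell/2)$ is strictly decreasing in $\ell$ because $p_s'<0$ on $(0,\infty)$ by \eqref{eq:sub-bm-density}. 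The jump of size $\Prob(S_s=0)$ at $\ell=0$ only reinforces strict concavity (it makes $f_s$ lie strictly below the chord through $0$), so $f_s$ is strictly concave on $[0,\infty)$ as required — at least when $\Prob(S_s=0)<1$, i.e. when the subordinator is not identically zero, which may be assumed (otherwise $L$ is constant and the statement is trivial). Therefore the optimal transport plan $\mu_t^{x,y}$ is \emph{uniquely determined} by the marginals $L^x_t,L^y_t$ and the cost $f_s$, hence independent of which maximal coupling we started from; in particular it must agree with the plan furnished by the subordinated reflection coupling. Letting $s\downarrow 0$ is unnecessary here: one fixes any single $s>0$ and concludes the one-dimensional time-$t$ distribution of every Markovian maximal coupling equals that of $(\widetilde B^x_{S_\cdot},\widetilde B^y_{S_\cdot})$.

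Finally I would upgrade from equality of one-dimensional distributions to equality of the processes. Since both couplings are Markov processes (Markovian couplings by hypothesis) with the \emph{same} marginal transition semigroup, a Markov process is determined by its one-dimensional time-$t$ distributions together with its transition kernel; so it suffices to identify the transition kernels $\Prob((\widehat X^x_{t+s},\widehat X^y_{t+s})\in\cdot\mid(\widehat X^x_t,\widehat X^y_t)=(x_1,y_1))$ for a.e.\ $(x_1,y_1)$. But for fixed $(x_1,y_1)$ this conditional law is again a maximal coupling of $L^{x_1}_s$ and $L^{y_1}_s$ (maximality passes to the Markov kernel via the first line of \eqref{eq:dtv-proof-lower-bound}), and it is Markovian; applying the optimal-transport uniqueness of the previous paragraph at the pair $(x_1,y_1)$ and a further auxiliary time shows this kernel is uniquely determined, hence equals the subordinated-reflection kernel. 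By the Markov property and a monotone-class / finite-dimensional-distributions argument the two couplings coincide as processes. The main obstacle I anticipate is the strict-concavity / uniqueness step when $\Prob(S_s=0)>0$: the cost $\rho$ then has a discontinuity at the diagonal, which is outside the exact hypotheses of \cite{GangMcCa1996}, so one must either argue that the atomic part $\Prob(S_s=0)\delta_0$ of the transition measure forces the common mass $\int p_t\wedge p_t$ to be transported by the identity (so that, conditioned on $\{\widehat X^x_t\neq\widehat X^y_t\}$, the cost really is the smooth strictly concave $f_s$ on $(0,\infty)$ and Gangbo--McCann applies verbatim), or invoke a version of the uniqueness theorem that tolerates such costs; handling this diagonal atom cleanly is the delicate point of the proof.
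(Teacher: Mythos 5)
Your proposal follows essentially the same route as the paper's proof: by \eqref{eq:tv-subordinated} the cost $\rho(u,v)=\dtv(L_s^u,L_s^v)$ is a strictly concave function of $|u-v|$, so the Gangbo--McCann uniqueness result for \eqref{eq:wasserst} forces the time-$t$ joint law of any Markovian maximal coupling, which by \eqref{dtv-markovian-maximal-eq-th-tih} attains that infimum, to be uniquely determined. The additional care you take with the diagonal atom $\Prob(S_s=0)\delta_0$ and with upgrading fixed-time uniqueness to process-level uniqueness via the Markov transition kernels addresses precisely the points the paper's terse argument leaves implicit, and your handling of them is sound.
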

\begin{proof}
By \eqref{eq:tv-subordinated} we have $\dtv(L_t^x,L_t^y) = \Prob(S_t =0)(1-\delta_{x}(y)) + 2 (2\pi)^\frac{d-1}{2} \int_0^\frac{|x-y|}{2} p_t(r)\, dr$ and this is a strictly concave function of $|x-y|$ since for $x>0$
\begin{equation}
\frac{d^2}{dx^2} \left(2\int_0^{\frac{x}{2}} p_t(r)\, dr\right) = \frac{d}{dx} \left(p_t\left(\frac{x}{2}\right)\right) = \frac{1}{2} p_t'\left(\frac{x}{2}\right) < 0,
\end{equation}  cf. \eqref{eq:sub-bm-density}. Thus \eqref{eq:wasserst} has a unique solution for this distance. Hence the given solution of \eqref{dtv-markovian-maximal-eq-th-tih} is unique.
\end{proof}

A different approach to maximal coupled subordinated Brownian motion will be given in Theorem \ref{thm:mmc-sub-bm-representations}.

\section{Feller processes and generators of Markovian couplings}

For basics about Feller processes we refer again to the appendix (Section \ref{appendix}). We start with conditions for a Markovian coupling to be a Feller process.  

\begin{lemma} \label{thm:coupling-of-feller} Suppose $(X_t)_{t\geq 0}$ and $(Y_t)_{t\geq 0}$ are Feller processes on $\R^d$ and $(X_t,Y_t)_{t\geq 0}$ is a time-homogeneous Markov process, then the semigroup $(T_t)_{t\geq 0}$ on $B_b(\R^{2d})$ corresponding to $(X_t,Y_t)_{t\geq 0}$ satisfies
\begin{equation} \label{eq:Binftyinvariant}
T_t: B_\infty \to B_\infty
\end{equation}
where $B_\infty$ are the bounded measurable functions vanishing at $\infty$. Furthermore, if the semigroup satisfies $T_t:C_\infty \to C$ for each $t\geq 0$ (using \eqref{eq:Binftyinvariant} it thus satisfies the Feller property) then
\begin{equation}
(T_t)_{t\geq 0} \text{ is strongly continuous } 
\end{equation}
and thus $(X_t,Y_t)_{t\geq 0}$ is a Feller process.
\end{lemma}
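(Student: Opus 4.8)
The plan is to verify the two claims — invariance of $B_\infty$ under $T_t$, and strong continuity given the mapping property $T_t: C_\infty \to C$ — essentially separately, using the coupling structure for the first and standard semigroup theory for the second. For the first claim, the point is that although $(X_t,Y_t)$ is only assumed to be a Markov process, each marginal is a \emph{Feller} process, so each marginal semigroup maps $C_\infty(\R^d) \to C_\infty(\R^d)$ and in particular $B_\infty(\R^d) \to B_\infty(\R^d)$. I would first treat product functions $u(x,y) = v(x) w(y)$ with $v, w \in B_\infty(\R^d)$: writing $T_t u(x,y) = \E^{(x,y)}[v(X_t) w(Y_t)]$ and using boundedness of $w$ together with the fact that $\E^x[v(X_t)] \to 0$ as $|x|\to\infty$ (Feller property of the first marginal), and symmetrically in $y$, one sees $T_t u \in B_\infty(\R^{2d})$ — here one has to be slightly careful that $(x,y)\to\infty$ in $\R^{2d}$ only forces $|x|\to\infty$ \emph{or} $|y|\to\infty$, so one splits according to which coordinate blows up and uses the bound $|T_t u(x,y)| \le \|w\|_\infty |\E^x v(X_t)| \wedge \|v\|_\infty |\E^y w(Y_t)|$ or, more simply, $\le \min(\|w\|_\infty |T_t^X v(x)|,\ \|v\|_\infty |T_t^Y w(y)|)$ where $T^X, T^Y$ are the marginal semigroups. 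Then I would pass from products to general $u \in B_\infty(\R^{2d})$ by an approximation argument: finite linear combinations of such products are dense in $C_\infty(\R^{2d})$ in the uniform norm (Stone–Weierstrass), $C_\infty$ is dense in $B_\infty$ in an appropriate sense, $T_t$ is a contraction on $B_b$, and $B_\infty$ is closed in the sup norm, so the property propagates.

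For the second claim, assume additionally $T_t: C_\infty(\R^{2d}) \to C(\R^{2d})$; combined with \eqref{eq:Binftyinvariant} this gives $T_t: C_\infty \to C_\infty$, i.e. the Feller property. To get strong continuity (which, for a Feller semigroup on $C_\infty$, is equivalent to $\|T_t u - u\|_\infty \to 0$ as $t\downarrow 0$ for all $u \in C_\infty$, by the semigroup property and contractivity), the standard route is: it suffices to check $T_t u(x,y) \to u(x,y)$ \emph{pointwise} as $t\downarrow 0$ for $u$ in a dense subset, because a pointwise-continuous contraction Feller semigroup on $C_\infty$ is automatically strongly continuous (see e.g. the appendix results the paper refers to, or \cite[Lemma 1.4]{HsuStur2013}-type arguments / standard Feller theory). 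Pointwise continuity at $t=0$ follows from right-continuity of the paths of the Markov process $(X_t,Y_t)$ together with dominated convergence: $T_t u(x,y) = \E^{(x,y)}[u(X_t,Y_t)] \to \E^{(x,y)}[u(X_0,Y_0)] = u(x,y)$ for bounded continuous $u$, using that $(X_t, Y_t) \to (X_0,Y_0)$ a.s. as $t\downarrow 0$ — here one needs the process to have a version with right-continuous paths, which holds since each marginal, being Feller, does, and one can realize the coupling with right-continuous paths. Once strong continuity is established, $(X_t,Y_t)$ together with the semigroup $(T_t)_{t\geq 0}$ on $C_\infty(\R^{2d})$ satisfies the definition of a Feller process.

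The main obstacle is the first claim — the passage from product functions to arbitrary $u \in B_\infty(\R^{2d})$, and more precisely making the density/approximation step clean. The subtlety is that $B_\infty$ (bounded measurable, vanishing at $\infty$) is not the sup-norm closure of $C_\infty$, so one cannot simply approximate a general $u \in B_\infty$ uniformly by nice functions. The fix is to argue coordinatewise using the marginal bound: for $u\in B_\infty(\R^{2d})$ and $\veps>0$, there is a compact $K\subset\R^d$ with $\dtv(X_t^x, \cdot)$... more concretely, use that $|T_t u(x,y)| \le \E^x\big[\,\E^y[\,|u(X_t,Y_t)|\mid X_t]\,\big]$ and exploit tightness: for fixed $t$, the Feller property of the $X$-marginal gives, for any $\veps$, a compact set outside which $\E^x[\One_{X_t \in K^c}]$ is small uniformly... this needs the Feller property to be leveraged as a statement about $\Prob(X_t^x \in K^c) \to 1$-type estimates, i.e. $x\mapsto \Prob(X_t^x \in \overline{B_R(0)})$ vanishes at infinity for each fixed $R$, which is exactly \eqref{eq:Binftyinvariant} applied to indicator-type functions in $B_\infty(\R^d)$ for the marginal. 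So in fact the cleanest argument keeps everything at the level of product functions and the two-sided bound $|T_t u(x,y)| \le \min\big(\|u\|_\infty\, \Prob(X_t^x \in K^c) + \sup_{x'\in K}|\cdots|,\ \ldots\big)$; I expect getting this bookkeeping exactly right, rather than any deep difficulty, to be where the real work of the proof lies.
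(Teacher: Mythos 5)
Your second step (strong continuity) is essentially the paper's argument and is sound: combine \eqref{eq:Binftyinvariant} with $T_t:C_\infty\to C$ to get the Feller property, check $T_tf(x,y)\to f(x,y)$ pointwise as $t\downarrow 0$, and invoke the standard fact that for a sub-Markovian semigroup with the Feller property pointwise convergence at $t=0$ upgrades to strong continuity (the correct reference is \cite[Lemma 1.4]{BoetSchiWang2013}, not Hsu--Sturm). The only deviation is that you derive the pointwise convergence from almost sure right-continuity of the coupled paths and then have to argue that one ``can realize the coupling with right-continuous paths''; the paper avoids this issue entirely by using only stochastic continuity of the two marginals, which transfers to the pair immediately ($|(X^x_t,Y^y_t)-(x,y)|\le |X^x_t-x|+|Y^y_t-y|\to 0$ in probability), so no statement about path versions of the given coupling is needed.

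The genuine gap is in your treatment of \eqref{eq:Binftyinvariant}. The route you develop at length --- product functions $v(x)w(y)$, then Stone--Weierstrass and a density argument --- fails exactly where you say it does: $B_\infty(\R^{2d})$ is not the uniform closure of $C_\infty(\R^{2d})$, so the property does not propagate from products to general $f\in B_\infty$, and the replacement you sketch is left as unfinished ``bookkeeping'', with the key inequality written with placeholders and with the compact set on the wrong side (the term that must be shown small as the starting point escapes to infinity is $\Prob(X^x_t\in K)$, the probability of lying \emph{in} a compact set, not $\Prob(X^x_t\in K^c)$). The needed argument is much shorter than you anticipate and requires no product functions or approximation of $f$ at all; it is the paper's proof: given $f\in B_\infty(\R^{2d})$ and $\veps>0$, choose $R$ with $|f|<\veps$ off the ball $B^{2d}_R$ and $g\in C_\infty(\R^d)$ with $g\ge \One_{B^d_R}$; then
\begin{equation*}
|T_tf(x,y)| \;\le\; \veps + \|f\|_\infty\,\Prob\bigl(|X^x_t|\le R,\ |Y^y_t|\le R\bigr) \;\le\; \veps + \|f\|_\infty \min\bigl(\widetilde T_t g(x),\ \widetilde S_t g(y)\bigr),
\end{equation*}
where $\widetilde T_t,\widetilde S_t$ are the marginal Feller semigroups. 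Since $\widetilde T_tg,\widetilde S_tg\in C_\infty(\R^d)$ and $|(x,y)|\to\infty$ forces $|x|\to\infty$ or $|y|\to\infty$, the minimum tends to $0$, and $\veps$ was arbitrary. So the ingredients you list at the end (domination of the compact-set probability via the marginal semigroups, and the two-sided/minimum structure you already used for products) do assemble into a proof, but your proposal stops short of assembling them, and its principal route would not have worked.
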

\begin{proof}
Since $(X_t,Y_t)_{t\geq 0}$ is a time-homogeneous Markov process a corresponding semigroup $(T_t)_{t\geq 0}$, given by
\begin{equation}
T_tf(x,y) = \E(f(X_t^x,Y_t^y)) \text{ for $f$ bounded and measurable,}
\end{equation}
exists. Let $f\in B_\infty(\R^{2d}).$ For $\varepsilon>0$ exists $R_\varepsilon>0$ such that $|f(x)|<\varepsilon$ for $|x|\geq R_\varepsilon$ and $\tilde f_\varepsilon\in C_\infty(\R^{d})$ such that $\tilde f_\varepsilon \geq \One_{B^d_{R_\varepsilon}}$, where $B^d_R$ is the ball with center 0 and radius $R$ in $\R^d$. Thus

\begin{equation}
\begin{split}
\E(|f(X_t^x,Y_t^y)|) &\leq \varepsilon + \|f\|_\infty \E(\One_{B^{2d}_{R_\varepsilon}}(X_t^x,Y_t^y))\\
&\leq \varepsilon + \|f\|_\infty\Prob(|X_t^x|\leq R_\varepsilon, |Y_t^y|\leq R_\varepsilon)\\
&\leq \varepsilon+ \|f\|_\infty\Prob(|X_t^x|\leq R_\varepsilon) + \|f\|_\infty\Prob(|Y_t^y|\leq R_\varepsilon)\\
&\leq \varepsilon + \|f\|_\infty\widetilde T_t\tilde f_\varepsilon(x) + \|f\|_\infty\widetilde S_t\tilde f_\varepsilon(y)
\end{split}
\end{equation} 
where $\widetilde T_t$ and $\widetilde S_t$ are the Feller semigroups of $X_t$ and $Y_t$, respectively. Thus 
\begin{equation}
T_tf(x,y)\xrightarrow{|(x,y)|\to \infty}0.
\end{equation}

Assume $T_t:C_\infty \to C$. Now \eqref{eq:Binftyinvariant} implies the Feller property. Since $(X_t)_{t\geq 0}$ and $(Y_t)_{t\geq 0}$ are Feller processes they are stochastically continuous (in time) and so $(X_t^x,Y_t^y)$ is stochastically continuous at $t = 0$. Which implies $T_t(f(x,y))-f(x,y) \xrightarrow{t\to 0} 0$ for $(x,y)\in\R^{2d}$ and $f\in C_\infty$. In the current setting this is equivalent to the strong continuity of $T_t$, see \cite[Lemma 1.4]{BoetSchiWang2013}.
\end{proof}

The following examples show that not all Markovian couplings of Feller processes are Feller processes.

\begin{example} \label{ex:coupling-time-inhomo-not-strong-mp}
\begin{enumerate}
\item A time-inhomogeneous Markovian coupling of Feller processes can constructed as follows: Start a Brownian motion at $x$ and let an identical copy start in $y$, let them both run until time 1, afterwards let them continue as independent Brownian motions. I.e., they are syncronized coupled up to time 1 and afterwards independent. The marginals are Brownian motions (and thus in particular Feller processes), but the joint process is a time-inhomogeneous Markov process.  
\item A Markovian coupling of Feller processes can be a time-homogeneous Markov process, which is not a strong Markov process. Let $(L_t)_{t\geq 0}$ be a symmetric L\'evy process on $\R$ (starting in 0) and consider the process
\begin{equation}
(X_t^x,Y_t^y):= \begin{cases} (x+L_t,y-L_t) &\text{ for } x\neq y, \\ (x+L_t,y+L_t) &\text{ for } x=y. \end{cases} 
\end{equation}
Then the components are Feller processes and the joint process is a Markov process. But it is not a strong Markov process, and thus not a Feller process. The proof is analogous to the proof that $B_t^{x}\One_{\R\backslash\{0\}}(x)$ is not a strong Markov process, e.g. \cite[Counterexample 20.8.]{Stoy1997}.
\end{enumerate}
\end{example}

The processes we considered in the previous section, however, are Feller processes.

\begin{theorem}\label{thm:rcbm-feller} The reflection coupling of Brownian motion is a Feller process.
\end{theorem}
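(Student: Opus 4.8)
The plan is to verify the hypotheses of Lemma \ref{thm:coupling-of-feller} for the reflection coupling $(\widetilde B^x_t, (R^\tau_{x,y}\widetilde B)_t)_{t\geq 0}$ of $\R^d$-valued Brownian motions. The marginals are Brownian motions, hence Feller, and the coupled process is a time-homogeneous Markov process by construction (the reflection structure depends only on the current pair $(x,y)$ through $R_{x,y}$ and $\tau_{H_{x,y}}$, and on the diagonal the two components move together). By \eqref{eq:Binftyinvariant} the semigroup $(T_t)_{t\geq 0}$ already maps $B_\infty(\R^{2d})$ into itself; what remains is to check the mapping property $T_t : C_\infty(\R^{2d}) \to C(\R^{2d})$, i.e. that $(x,y)\mapsto T_tf(x,y)$ is continuous for $f\in C_\infty$. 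Strong continuity then follows from the lemma, and the Feller property is complete.

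The core of the argument is thus a continuity estimate. First I would dispose of the diagonal: for $x=y$ the coupled process is $(x+B_t, x+B_t)$ (synchronous), while for $x\neq y$ it is $(x+B_t, R_{x,y}(x+B_t))$ up to the hitting time $\tau_{H_{x,y}}$ of the reflection hyperplane and synchronous afterwards. Away from the diagonal, i.e. for $(x,y)$ in a neighbourhood of a point $(x_0,y_0)$ with $x_0\neq y_0$, the maps $(x,y)\mapsto R_{x,y}$ and $(x,y)\mapsto H_{x,y}$ are smooth, and the hitting time $\tau_{H_{x,y}}$ depends continuously (in an appropriate $L^1$ or in-probability sense) on $(x,y)$ because the signed distance of $x$ to $H_{x,y}$ varies continuously and Brownian motion in the normal direction hits $0$ continuously in the starting point — this is the standard argument that reflection coupling is itself Feller, as in \cite{HsuStur2013, BoetSchiWang2011a}. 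A dominated-convergence argument then gives $T_tf(x,y)\to T_tf(x_0,y_0)$ for $f\in C_\infty$.

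The delicate point — and the main obstacle — is continuity as $(x,y)$ approaches the diagonal, say $(x_n,y_n)\to(x_0,x_0)$ with $x_n\neq y_n$. Here $R_{x,y}z$ has no continuous extension to $y=x$ (as the parenthetical remark after \eqref{eqdef:reflection} in the excerpt emphasizes), so one cannot argue pathwise. Instead I would use the fact that $\tau_{H_{x_n,y_n}}\to 0$ in probability as $|x_n-y_n|\to 0$: the component $\widetilde B^{x_n}$ starts at distance $|x_n-y_n|/2$ from the hyperplane in the normal direction, so $\tau_{H_{x_n,y_n}}$ is the first hitting time of $0$ by a one-dimensional Brownian motion started at $|x_n-y_n|/2$, which tends to $0$ a.s. Consequently, after time $\tau_{H_{x_n,y_n}}$ the two components agree and before it they are at distance at most the diameter of $\{z : |z|\le \tau_{H_{x_n,y_n}}^{1/2}\cdot(\text{something})\}$ — more precisely, for $f\in C_\infty$ uniformly continuous one splits
\begin{equation}
|T_tf(x_n,y_n) - T_tf(x_0,x_0)| \le \E\big|f(\widetilde B^{x_n}_t,(R^\tau \widetilde B)_t) - f(\widetilde B^{x_0}_t,\widetilde B^{x_0}_t)\big|,
\end{equation}
couples $\widetilde B^{x_n}$ and $\widetilde B^{x_0}$ by translation, and controls the error using uniform continuity of $f$ together with $\tau_{H_{x_n,y_n}}\to 0$ in probability and the fact that on $\{\tau_{H_{x_n,y_n}} > t\}$ (a vanishing-probability event) $f$ is merely bounded. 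This yields $T_tf(x_n,y_n)\to T_tf(x_0,x_0)$, completing the verification that $T_t: C_\infty\to C$; Lemma \ref{thm:coupling-of-feller} then delivers strong continuity and the Feller property.
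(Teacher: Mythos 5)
Your proposal is correct, and it shares the paper's outer strategy (reduce to the mapping property $T_t:C_\infty\to C$ and invoke Lemma \ref{thm:coupling-of-feller} for the behaviour at infinity and the strong continuity), but your core continuity argument is genuinely different. The paper never argues pathwise: it writes the time-$t$ law of the reflection coupling explicitly in mirror-coupling form,
\begin{equation*}
\E\bigl(f(B_t^x,B_t^y)\bigr)=\int f(z,z)\,\bigl(g_{t,x}(z)\land g_{t,y}(z)\bigr)\,dz+\int f(z,R_{x,y}z)\,\bigl(g_{t,x}(z)-g_{t,y}(z)\bigr)^+\,dz,
\end{equation*}
and obtains joint continuity in $(x,y)$ by a single dominated-convergence argument; the discontinuity of $R_{x,y}$ at the diagonal is harmless there because the weight $(g_{t,x}-g_{t,y})^+$ vanishes as $y\to x$. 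You instead treat the diagonal probabilistically, via $\tau_{H_{x_n,y_n}}\to 0$ in probability, coupling with the synchronous process started at $(x_0,x_0)$ through the same driving Brownian motion, and using uniform continuity of $f$ on $\{\tau_{H_{x_n,y_n}}\le t\}$ together with boundedness on the vanishing-probability event $\{\tau_{H_{x_n,y_n)}}>t\}$ --- this is sound. Your off-diagonal step is also correct in substance, but it is the part you leave most sketchy: to make it rigorous one needs a.s. convergence of $\tau_{H_{x_n,y_n}}$ to $\tau_{H_{x_0,y_0}}$ (using that one-dimensional Brownian motion immediately crosses a level after first hitting it) and that $\Prob(\tau_{H_{x_0,y_0}}=t)=0$, before dominated convergence applies. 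The paper's computation buys brevity: because the fixed-time joint law is explicit, one dominated-convergence argument covers both regimes at once. Your pathwise route needs more bookkeeping but uses only the path construction, not the explicit densities, so it would also work in situations where the time-$t$ law of the coupling is not available in closed form. (Minor typo above: read $\{\tau_{H_{x_n,y_n}}>t\}$ for the exceptional event.)
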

\begin{proof}
For each $x$ and $y$ let $(B_t^x,B_t^y)_{t\geq 0}$ be reflection coupled Brownian motion, and hence mirror coupled Brownian motion. Denote by $g_{t,x}$ the density of $B_t^x$. For $f\in C_\infty$  
\begin{equation}
\E(f(B_t^x,B_t^y)) =  \int f(z,z) (g_{t,x}(z)\land g_{t,y}(z))\,dz + \int f(z,R_{x,y}z) (g_{t,x}(z)-g_{t,y}(z))^+\,dz
\end{equation}
and the integrands are bounded and continuous. For the continuity with respect to $(x,y)$ note that $(x,y)\mapsto g_{t,x}(z) \land g_{t,y}(z)$, $(x,y)\mapsto (g_{t,x}(z)-g_{t,y}(z))^+$ are continuous and the latter vanishes for $x=y$, and $(x,y) \mapsto R_{x,y}z$ is continuous for $x\neq y$. Thus dominated convergence implies that $(x,y) \mapsto \E(f(B_t^x,B_t^y))$ is continuous. Now Lemma \ref{thm:coupling-of-feller} implies the statement.
\end{proof}

By \cite[Lemma 4.5]{BoetSchiWang2013} the next result follows. 
\begin{corollary} \label{thm:srcbm-feller}
Subordinated reflection coupled Brownian motion is a Feller process.
\end{corollary}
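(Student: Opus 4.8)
The plan is to invoke the cited result, \cite[Lemma 4.5]{BoetSchiWang2013}, which states that subordination preserves the Feller property: if $(Z_t)_{t\geq 0}$ is a Feller process on a locally compact separable metric space and $(S_t)_{t\geq 0}$ is an independent subordinator, then the subordinated process $(Z_{S_t})_{t\geq 0}$ is again a Feller process. Here the role of $(Z_t)_{t\geq 0}$ is played by reflection coupled Brownian motion $(\widetilde B_t^x,\widetilde B_t^y)_{t\geq 0}$ on $\R^{2d}$, which is a Feller process by Theorem \ref{thm:rcbm-feller}, and the subordinator is the $(S_t)_{t\geq 0}$ appearing in the definition of subordinated reflection coupled Brownian motion $(\widetilde B_{S_t}^x,\widetilde B_{S_t}^y)_{t\geq 0}$ in Section \ref{sec:maxsBM}.

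First I would note that the state space $\R^{2d}$ and the underlying process $(\widetilde B_t^x,\widetilde B_t^y)_{t\geq 0}$ satisfy the hypotheses of \cite[Lemma 4.5]{BoetSchiWang2013}: it is a conservative Feller process (by Theorem \ref{thm:rcbm-feller}), and the subordinator $(S_t)_{t\geq 0}$ is by construction independent of it. Second, I would observe that the subordinated reflection coupled Brownian motion is exactly the time-change $t\mapsto (\widetilde B_{S_t}^x,\widetilde B_{S_t}^y)$, so it coincides with the subordinate process in the sense of Bochner to which the lemma applies. Applying the lemma then immediately yields that $(\widetilde B_{S_t}^x,\widetilde B_{S_t}^y)_{t\geq 0}$ is a Feller process, proving the corollary.

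Since the statement is a direct corollary of an existing result applied to the Feller process produced in the preceding theorem, there is essentially no obstacle; the only point requiring a word of care is confirming that Theorem \ref{thm:rcbm-feller} does produce a \emph{conservative} Feller process so that subordination is well defined (Brownian motion, and hence its reflection coupling, does not explode), but this is immediate.

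\begin{proof}
The reflection coupling of Brownian motion is a (conservative) Feller process on $\R^{2d}$ by Theorem \ref{thm:rcbm-feller}. The subordinated reflection coupled Brownian motion $(\widetilde B_{S_t}^x,\widetilde B_{S_t}^y)_{t\geq 0}$ is obtained from it by subordination with respect to the independent subordinator $(S_t)_{t\geq 0}$. Hence \cite[Lemma 4.5]{BoetSchiWang2013} applies and yields that $(\widetilde B_{S_t}^x,\widetilde B_{S_t}^y)_{t\geq 0}$ is a Feller process.
\end{proof}
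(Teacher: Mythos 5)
Your proposal is correct and follows exactly the paper's own argument: the paper also deduces the corollary by applying \cite[Lemma 4.5]{BoetSchiWang2013} to the Feller process obtained in Theorem \ref{thm:rcbm-feller}, subordinated by the independent subordinator $(S_t)_{t\geq 0}$. Your additional remark on conservativeness is a harmless extra check and does not change the route.
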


Note that these Feller processes are clearly not irreducible, since every path lives eventually only on $\{(x,x) \,|\, x\in\R^d\} \subset \R^{2d}.$

Now we calculate the generators of couplings which are Feller processes. 

\begin{example}
\begin{enumerate}
\item For a one-dimensional Brownian motion the synchronized coupling $(B_t^0+x,B_t^0+y)$ has the generator
\begin{equation}
A f(x,y) = \frac{1}{2} \sum_{j,k=1}^2 \partial_j\partial_k f(x,y) = \frac{1}{2} \Delta f(x,y) +  \partial_x\partial_y f(x,y).
\end{equation}
Note that the covariance matrix is given by $\left(\begin{array}{cc}
1 & 1\\
1& 1\\
\end{array}\right).$

\item For a one-dimensional Brownian motion the coupling $(B_t^0+x,-B_t^0+y)$ has generator
\begin{equation}
A f(x,y) = \frac{1}{2} \sum_{j,k=1}^2 \partial_j\partial_k f(x,y) (-1)^{k+j} = \frac{1}{2} \Delta f(x,y) - \partial_x\partial_y f(x,y).
\end{equation}
Note that the covariance matrix is given by $\left(\begin{array}{cc}
1 & -1\\
-1& 1\\
\end{array}\right).$
\item For a one-dimensional Brownian motion the reflection coupling $(B_t^x,(R_{x,y}^\tau B^x)_t)_{t\geq 0}$ has the (extended pointwise, cf. Section \ref{appendix}) generator

\begin{equation}
A f(x,y) = \frac{1}{2} \Delta f(x,y) + (1-2\One_{\R^d\backslash\{0\}}(x-y)) \partial_x\partial_y f(x,y).
\end{equation}
This is a special case of the following result. 
\end{enumerate}
\end{example}
\begin{theorem}
Let $B_t^0$ be a Brownian motion on $\R^d$. Then the (extended pointwise) generator of the reflection coupling is on the twice continuously differentiable functions given by
\begin{equation} \label{eq:rcbm-generator}
Af(x,y) =  \frac{1}{2}\Delta f(x,y) +  \sum_{i=1}^d \sum_{k=1}^d \left(\delta_{i,k} - 2 \frac{(x_i-y_i)(x_k-y_k)}{|x-y|^2}\right) \partial_{x_i}\partial_{y_k} f(x,y) 
\end{equation}
with $\delta_{i,k} = \begin{cases}
1 &, i=k\\0 &, i\neq k
\end{cases}$ and the convention $\frac{0}{0} :=0$.
\end{theorem}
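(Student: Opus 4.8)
The plan is to compute the generator of reflection coupled Brownian motion $(B_t^x,(R_{x,y}^\tau B^x)_t)_{t\geq0}$ by identifying it, away from the diagonal $\{x=y\}$, as a $2d$-dimensional Brownian motion with a degenerate (singular) covariance matrix, and then to note that on the diagonal the coupling is synchronous. Concretely, fix $x\neq y$ and work on a short time interval $[0,h)$ with $h$ small enough that the hitting time $\tau_{H_{x,y}}$ has not yet occurred (its distribution has no atom at $0$ for $x\neq y$). On this interval $Y_t = R_{x,y}B_t^x$, so if we write $R = R_{x,y}$ for the (linear part of the) reflection, the pair $(B_t^x,R_{x,y}B_t^x)$ is, up to the deterministic shift encoded in the affine map $R_{x,y}$, a linear image of a single $d$-dimensional Brownian motion: $(B_t^x, Y_t) = (x,y) + (\mathrm{Id}, R)\,(B_t^x - x)$, where $R$ is the orthogonal reflection matrix $R_{ik} = \delta_{ik} - 2\frac{(x_i-y_i)(x_k-y_k)}{|x-y|^2}$. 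Hence for $t<\tau_{H_{x,y}}$ the process is a Gaussian process with covariance matrix $\Sigma = \begin{pmatrix} \mathrm{Id} & R \\ R^{\top} & \mathrm{Id}\end{pmatrix}$ (using $R^{\top}=R$ and $R^2 = \mathrm{Id}$), and the generator of a Brownian motion with covariance $\Sigma$ is $\frac12\sum_{m,n} \Sigma_{mn}\,\partial_m\partial_n$.

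The key steps, in order, are: (i) recall that the extended pointwise generator is defined (Section~\ref{appendix}) as the pointwise limit $Af(x,y) = \lim_{t\downarrow 0} \tfrac{1}{t}\bigl(T_tf(x,y) - f(x,y)\bigr)$, and that by Corollary~\ref{thm:srcbm-feller}/Theorem~\ref{thm:rcbm-feller} the coupling is a Feller process so this limit exists on a suitable domain, in particular on $C_b^2$; (ii) for $x\neq y$, use the decomposition $T_tf(x,y) = \E\bigl(f(B_t^x,Y_t)\bigr)$ and split according to whether $\tau_{H_{x,y}} \le t$ or $\tau_{H_{x,y}} > t$: the contribution of $\{\tau_{H_{x,y}}\le t\}$ is $O(\Prob(\tau_{H_{x,y}}\le t))$ which is $o(t)$ as $t\downarrow0$ because the half-line/half-space hitting time started at positive distance $|x-y|/2$ has all positive moments and, more to the point, $\Prob(\tau_{H_{x,y}}\le t) = o(t)$ (it decays like $\exp(-c/t)$, from the Gaussian tail of the maximum); (iii) on the complementary event, $Y_t = R_{x,y}B_t^x$ exactly, so the increment $(B_t^x - x, Y_t - y)$ is (a linear image of) a Brownian increment with covariance $t\Sigma$, and a second-order Taylor expansion of $f$ around $(x,y)$ gives $T_tf(x,y) - f(x,y) = \tfrac{t}{2}\sum_{m,n}\Sigma_{mn}\partial_m\partial_n f(x,y) + o(t)$; (iv) rewrite $\tfrac12\sum_{m,n}\Sigma_{mn}\partial_m\partial_n$ in block form: the two diagonal blocks give $\tfrac12\Delta_x f + \tfrac12\Delta_y f = \tfrac12\Delta f$ and the off-diagonal blocks give $\sum_{i,k} R_{ik}\,\partial_{x_i}\partial_{y_k} f$, which is exactly the stated formula \eqref{eq:rcbm-generator}; (v) finally, for $x=y$ the convention $\tau_{H_{x,x}}=0$, $R_{x,x}z=z$ makes the coupling synchronous, so $T_tf(x,x) = \E f(B_t^0+x,B_t^0+x)$ and the generator there is $\tfrac12\Delta f(x,y) + \sum_i \partial_{x_i}\partial_{y_i} f(x,y)$, which agrees with \eqref{eq:rcbm-generator} under the convention $\frac00:=0$ (the off-diagonal matrix becomes $\mathrm{Id}$, consistent with the covariance matrix $\begin{pmatrix}\mathrm{Id}&\mathrm{Id}\\\mathrm{Id}&\mathrm{Id}\end{pmatrix}$ of the synchronous coupling noted in the example).

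The main obstacle I expect is step (ii): controlling the contribution of the event $\{\tau_{H_{x,y}} \le t\}$ and showing it is negligible at first order in $t$. This is where one must be careful, since on that event $Y$ has ``switched'' from the reflected copy to the identical copy and $f(B_t^x,Y_t) - f(x,y)$ is only $O(1)$, not $O(\sqrt t)$; the saving grace is that $\Prob(\tau_{H_{x,y}}\le t)$ is exponentially small in $1/t$ because $\tau_{H_{x,y}}$ is the first time a one-dimensional Brownian motion started at $|x-y|/2>0$ reaches $0$, whose density near $0$ behaves like $t^{-3/2}e^{-c/t}$. One should also briefly check that differentiating ``pointwise'' is legitimate, i.e. that $C_b^2$ (or at least the twice continuously differentiable functions with the relevant boundedness) lies in the domain of the extended pointwise generator — this is the reason the theorem is phrased with the extended pointwise generator rather than the $C_\infty$-generator, and it follows from the Feller property established in Theorem~\ref{thm:rcbm-feller} together with the standard fact that bounded $C^2$ functions are in the extended domain of a Feller process whose symbol is locally bounded. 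The remaining steps (iii)--(v) are routine Taylor expansion and block-matrix bookkeeping.
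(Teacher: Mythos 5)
Your proof is correct and essentially the paper's argument: the paper likewise replaces the stopped reflection $R_{x,y}^\tau$ by the plain reflection $R_{x,y}$ for small $t$ and computes $\tfrac12\Delta_z(f\circ h)(z)$ for $h(z)=(z,R_{x,y}z)$ via the chain rule, which is exactly your covariance-matrix/Taylor computation with $\Sigma=\left(\begin{smallmatrix}I & R\\ R & I\end{smallmatrix}\right)$ in different packaging. The only difference is that you make explicit the two points the paper leaves implicit --- that $\Prob(\tau_{H_{x,y}}\le t)$ is exponentially small in $1/t$ for $x\neq y$, so the switch to the unstopped reflection costs only $o(t)$, and the synchronous case $x=y$ --- which is added care, not a different route.
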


\begin{proof}
Let $x,y,z\in \R^d$, $h(z) := (z,R_{x,y} z)$ and $f \in C_c^\infty(\R^{2d})$. Then the (extended pointwise) generator of $((B_t^x, R_{x,y}^\tau B_t^x))_{t\geq 0}$ is calculated via
\begin{equation}
\lim_{t\to 0} \E\frac{f(B_t^x, R_{x,y}^\tau B_t^x) - f(x,y)}{t} = \lim_{t\to 0} \E\frac{f(B_t^x, R_{x,y} B_t^x) - f(x,R_{x,y} x)}{t}
\end{equation}
and
\begin{equation}
\begin{split}
\lim_{t\to 0} \E\frac{f(B_t^z, R_{x,y} B_t^z) - f(z,R_{x,y} z)}{t} = \lim_{t\to 0} \E\frac{f(h(B_t^0+z)) - f(h(z))}{t} = \frac{1}{2} \Delta_z (f\circ h)(z).
\end{split}
\end{equation}
Now the chain rule (with a tedious calculation) and setting $z=x$ yields the result.
\end{proof}

\begin{remark}
\begin{enumerate}
\item If $f\in C_\infty (\R^{2d})$ and $Af\in C_\infty(\R^{2d})$ in \eqref{eq:rcbm-generator}, then $f$ is also in the domain of the generator \cite[p. 23]{BoetSchiWang2013}. The generator coincides with the reflection coupling generator for diffusions as given in \cite[Example 5.2]{Chen1994a} when replacing therein $\sigma$ with the identity matrix. 
\item Obviously, since the coefficients in  \eqref{eq:rcbm-generator} are discontinuous at $x=y$, the test functions are not (and cannot, cf. \cite[Lemma 2.28]{BoetSchiWang2013}) be in the domain of the generator. Thus in particular they are not a core.
\item Note that for $R_{x,y}$ (and not $R_{x,y}^\tau$) we have
 \begin{equation}
Cov((B_{t}^x)_{(i)},(R_{x,y}B_{t}^x)_{(k)}) = \delta_{i,k} - 2 \frac{(x_i-y_i)(x_k-y_k)}{|x-y|^2}.
\end{equation}
\item Note that for reflection coupled Brownian motion $(\widetilde B_t^x,\widetilde B_t^y)$ one has
\begin{equation} \label{eq:norm-reflection}
|(\widetilde B_t^x -x, \widetilde B_t^y -y)| = \sqrt{2}|B_t^0|.
\end{equation}

\end{enumerate}
\end{remark}

\begin{theorem} \label{thm:mmc-sub-bm-representations}
For fixed $x,y\in \R^d$, a subordinator $(S_t)_{t\geq 0}$ and a Brownian motion $(B_t)_{t\geq 0}$ on $\R^d$ the 
\begin{enumerate}
\item[(i)] subordinated reflection coupled Brownian motion, i.e., $(B_{S_t}^x,(R_{x,y}^\tau B^x)_{S_t})_{t\geq 0}$,
\end{enumerate} 
is the unique Markovian version of
\begin{itemize}
\item[(ii)] mirror coupled subordinated Brownian motion, i.e., $(X_.,Y_.)\in \cC(B_{S_.}^x;B_{S_.}^y)$ where $(X_t,Y_t)$ are mirror coupled for each $t\geq 0$. (Note this does not give an explicit construction, it is just a distributional statement).
\end{itemize}
Moreover, the process given in (i) is a Feller process whose corresponding family of L\'evy measures $(N((x,y),.))_{(x,y)\in\R^{2d}}$ is determined by the state space dependent mirror coupling of the Levy measures of the subordinated Brownian motion:
\begin{equation} \label{eq:mmc-feller-generator}
N((x,y),A) = \int \One_A(z-x,z-y) (n(z-x) \land n(z-y))\, dz + \int \One_A(z-x,  R_{x,y}(z-y)) (n(z-x) - n(z-y))^+\, dz
\end{equation} 
where $n$ is the density of the L\'evy measure of $B_{S_.}$ and $A\in \Bskript(\R^{2d}\backslash\{0\}).$

\end{theorem}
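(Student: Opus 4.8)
The plan is to assemble the statement from three ingredients already available in the paper: the characterization of Markovian maximal couplings via the Wasserstein-type identity \eqref{dtv-markovian-maximal-eq-th-tih}, the strict concavity of $|x-y|\mapsto\dtv(L_t^x,L_t^y)$ established in the proof of Theorem \ref{thm:mmc-subordinated-bm-uniqueness}, and the Feller property from Corollary \ref{thm:srcbm-feller}. First I would recall that by the Lemma (Equation \eqref{eq:tv-subordinated}) and \eqref{eq:couplingtime}, \eqref{eq:def:maxcoupling-processes}, the subordinated reflection coupled Brownian motion $(B_{S_t}^x,(R_{x,y}^\tau B^x)_{S_t})_{t\geq0}$ is a Markovian maximal coupling of $(B_{S_.}^x)$ and $(B_{S_.}^y)$; Theorem \ref{thm:mmc-subordinated-bm-uniqueness} says it is the unique one. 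To get the equivalence of (i) and (ii), I would argue that any $(X_.,Y_.)\in\cC(B_{S_.}^x;B_{S_.}^y)$ which is mirror coupled at every fixed time is, by \eqref{mirror-coupling-ident} and \eqref{dtv-subprob}, a maximal coupling of the processes; conversely any maximal coupling must, at each fixed time, realize $\dtv(L_t^x,L_t^y)$, and — using the strict concavity of the distance together with the uniqueness of the optimal transport plan for strictly concave cost \cite{GangMcCa1996} — the conditional law on $\{X_t\neq Y_t\}$ is forced to be the mirror plan \eqref{mirror-coupling-distribution}. Hence (ii) describes exactly the class of maximal couplings, which by Theorem \ref{thm:mmc-subordinated-bm-uniqueness} contains a unique Markovian member, namely (i).

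For the generator statement I would proceed as follows. By Corollary \ref{thm:srcbm-feller} the process in (i) is a Feller process, so it has a generator of the standard Lévy-type form, and by the results collected in the appendix (and used for subordinated Feller processes in \cite{BoetSchiWang2013}) its jump behaviour is governed by a state-space dependent family of Lévy measures $N((x,y),\cdot)$. The idea is to compute $N$ directly from the construction: subordinating a process multiplies, loosely speaking, the Lévy measure of the directing subordinator against the transition kernel of the reflection coupling; more precisely, the Lévy measure of $B_{S_.}$ has density $n$ (with $n(z)=n(|z|)$ monotone, as in \eqref{eq:sub-bm-density} type formulas — this is the standard subordination formula for the Lévy measure), and a jump of the subordinator of size $s$ produces a joint displacement distributed as reflection coupled Brownian increments at time $s$, i.e. according to the mirror coupling \eqref{mirror-coupling-distribution} of $g_{s,0}$ with itself shifted. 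Integrating \eqref{mirror-coupling-distribution} against $\Prob(S\in ds)$ and using \eqref{eq:sub-bm-density} together with the fact that the "coincidence" part and the "mirror" part are preserved under this integration (because $R_{x,y}$ does not depend on $s$ and $g_s(z-x)\wedge g_s(z-y)$ integrates to $n(z-x)\wedge n(z-y)$ by monotonicity) yields exactly \eqref{eq:mmc-feller-generator}.

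The main obstacle is the rigorous identification of $N((x,y),\cdot)$: one must justify interchanging the subordination integral with the jump-measure computation, i.e. show that the Lévy measure of the subordinated bivariate process really is obtained by integrating the (degenerate, atom-at-$0$-free) part of the joint transition kernel of reflection coupled Brownian motion against the subordinator's Lévy measure. This is the content of the subordination theory for Feller processes (Phillips' theorem at the level of generators, in the form of \cite{BoetSchiWang2013}), but care is needed because the bivariate reflection coupling is not spatially homogeneous — its generator has the $x=y$-dependent coefficients of \eqref{eq:rcbm-generator} — so the subordination formula must be applied in its state-space dependent form, and one has to check that the resulting $N$ is a genuine Lévy kernel (measurable in $(x,y)$, with the right integrability near $0$), which follows from the corresponding properties of $n$ and the continuity of $(x,y)\mapsto R_{x,y}z$ off the diagonal. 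A secondary but routine point is handling the diagonal $x=y$, where $R_{x,x}=\mathrm{id}$ and the second integral in \eqref{eq:mmc-feller-generator} vanishes, consistently with the synchronized behaviour there; this matches the convention $\tfrac00:=0$ and causes no difficulty.
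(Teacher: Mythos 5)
Your overall architecture (mirror at fixed times $\Rightarrow$ maximal $\Rightarrow$ uniqueness via Theorem \ref{thm:mmc-subordinated-bm-uniqueness}, plus the Feller property from Corollary \ref{thm:srcbm-feller}) is sound for the uniqueness half, but the step you use to show that (i) actually satisfies (ii) is flawed. You claim that any maximal coupling is forced, at each fixed time, to have the mirror plan off the diagonal, by strict concavity and Gangbo--McCann. That is not true: maximality at a fixed time only pins down the diagonal mass $\Prob(\widetilde X_t=\widetilde Y_t)$ and says nothing about the off-diagonal law --- the paper itself points out that the minimizer of \eqref{eq:wasserst} for the cost $\One_{\{x\neq y\}}$ is not unique. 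The Gangbo--McCann uniqueness enters only through the identity \eqref{dtv-markovian-maximal-eq-th-tih} with the strictly concave cost $(x_1,y_1)\mapsto\dtv(X_s^{x_1},X_s^{y_1})$, i.e.\ it yields uniqueness of the \emph{Markovian} maximal coupling; it does not by itself identify the resulting fixed-time plan as the mirror plan, and using that identification to conclude that (i) is mirror coupled is circular as written. What is actually needed here is a direct verification, which is how the paper argues: conditioning on the subordinator, on $\{S_t\geq\tau\}$ the two components coincide and the coincidence probability is maximal by \eqref{eq:tv-subordinated}, while on $\{S_t<\tau\}$ the second component is exactly $R_{x,y}$ applied to the first; hence \eqref{mirror-coupling-ident} and \eqref{mirror-coupling-mirror} hold, and these two properties determine \eqref{mirror-coupling-distribution}. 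Your converse direction (fixed-time mirror $\Rightarrow$ maximal $\Rightarrow$ equal in law to (i) by Theorem \ref{thm:mmc-subordinated-bm-uniqueness}) is fine.

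For the L\'evy kernel, your formula and the key observation that $\wedge$ and $(\cdot)^+$ commute with the integration in $s$ (because the sign of $g_{s,0}(z-x)-g_{s,0}(z-y)$ does not depend on $s$) are correct, but the tool you propose for the rigorous identification --- Phillips' theorem / symbol-based subordination in a state-space dependent form --- is precisely what the paper says cannot be applied here: the test functions $C_c^\infty$ are not in the domain of the generator (see the remark after \eqref{eq:rcbm-generator} and the closing comments of the appendix), so the standard symbolic machinery identifying $N((x,y),\cdot)$ is unavailable, and your acknowledged ``main obstacle'' is left unresolved. The paper instead identifies $N((x,y),\cdot)$ through the small-time limit of $\tfrac1t\Prob\bigl((\widetilde B_{S_t}^x-x,\widetilde B_{S_t}^y-y)\in A\bigr)$ for $A$ bounded away from the origin: one first proves the bound $\Prob\bigl((\widetilde B_s^x-x,\widetilde B_s^y-y)\in A\bigr)\leq c\,s^3$ via a sixth-moment Markov inequality and \eqref{eq:norm-reflection}, which makes the appendix Lemma (limit \eqref{eq:calc-levymeasure}) applicable to the subordinator, and only then inserts the mirror form of the fixed-time law and pulls $\wedge$, $(\cdot)^+$ through $\int\nu(ds)$ to arrive at \eqref{eq:mmc-feller-generator}. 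Some such direct small-time argument (or a substitute for it) is needed in your write-up; also note the slip that the integral producing $n$ is against the L\'evy measure $\nu(ds)$ of $S$, not against $\Prob(S_t\in ds)$.
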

\begin{proof}
Given the process in (i), by Theorem \ref{thm:mmc-subordinated-bm-uniqueness} it is a unique Markovian maximal coupling. To see that it satisfies (ii) note that by construction either:\\
a) $B_{S_t}^x$ and $(R_{x,y}^\tau B^x)_{S_t}$ coincide with maximal probability (since it is a maximal coupling) or\\
b) $S_t(\omega) <\tau(\omega)$ and thus by definition $B_{S_t(\omega)}^x(\omega)$ is the mirror image of $R_{x,y} B^x_{S_t(\omega)}(\omega)$, i.e. \eqref{mirror-coupling-mirror} holds.\\
Thus the process satisfies (ii).

Let a Markov process be mirror coupled subordinated Brownian motion, i.e. it satisfies (ii). The marginal distributions of the processes in (i) and (ii) coincide since both are in $\cC(B_{S_.}^x;B_{S_.}^y)$. Also the joint distributions for each fixed time coincide. Thus, since it is a Markov process, the processes coincide in distribution. 

Finally we look the family of L\'evy measures of the subordinated reflection coupled Brownian motion.
Let $(\widetilde B_t^x,\widetilde B_t^y)_{t\geq 0}$ be reflection coupled Brownian motion started in $(x,y)$. Let $g_{t,z}$ be the density of $\R^d$-valued Brownian motion at time $t$ with starting point $z$, i.e., $g_{t,z}(x) dx = \Prob(B^z_t\in dx)= g_{t,0}(x-z) dx.$ Then for $A\in \Bskript(\R^{2d})$ 
\begin{equation}
\Prob( (\widetilde B_t^x,\widetilde B_t^y) \in A) = \int \One_A(z,z) (g_{t,x}(z) \land g_{t,y}(z)) \, dz + \int \One_A(z, R_{x,y}z) (g_{t,x}(z) - g_{t,y}(z))^+\, dz.
\end{equation}
Note that for a relative compact set $A\in \Bskript(\R^{2d}\backslash\{0\})$ there exists an $\varepsilon>0$ such that $A \subset \{ z \in\R^{2d} \,|\, |z|\geq \varepsilon \}$. Thus with the Markov inequality and \eqref{eq:norm-reflection}
\begin{equation}
\Prob((\widetilde B_t^x-x,\widetilde B_t^y-y) \in A)  \leq \Prob(|(\widetilde B_t^x-x,\widetilde B_t^y-y)| \geq \varepsilon) \leq \frac{8\E(|\widetilde B_t^0|^6)}{\varepsilon^6} \leq c t^3
\end{equation}
for some constant $c$. Now let $\nu$ be the L\'evy measure of $S_.$ and $n$ be the density of the L\'evy measure of $B_{S_.},$ that is $n(z) = \int_0^\infty g_{s,0}(z)\,\nu(ds)$. Then \eqref{eq:calc-levymeasure} yields
\begin{equation}
\begin{split}
&\frac{1}{t}  \int_{(0,\infty)} \Prob( (\widetilde B_s^x-x,\widetilde B_s^y-y) \in A)\, \Prob(S_t \in ds) \\
&\xrightarrow{t\to 0} \int_{(0,\infty)} \Prob( (\widetilde B_s^{x}-x,\widetilde B_s^y-y) \in A) \,\nu(ds)\\
&=\int \int_{(0,\infty)} \One_A(z-x,z-y) (g_{s,x}(z) \land g_{s,y}(z)) \,\nu(ds) dz + \int \int_{(0,\infty)} \One_A(z-x, R_{x,y}(z-y)) (g_{s,x}(z) - g_{s,y}(z))^+\, \nu(ds) dz \\
&=\int \One_A(z-x,z-y) (n(z-x) \land n(z-y))\, dz + \int \One_A(z-x,  R_{x,y}(z-y)) (n(z-x) - n(z-y))^+\, dz.
\end{split}
\end{equation}
Here we used for the last equality that the rotational symmetry (and directional monotonicity, compare \eqref{eq:sub-bm-density}) of the densities implies
\begin{equation}
\begin{split}
z\in H_{x,y} \quad \Leftrightarrow \quad&\exists s>0:\ g_{s,0}(z-x) = g_{s,0}(z-y) \\
\Leftrightarrow \quad& \forall s>0:\ g_{s,0}(z-x) = g_{s,0}(z-y) \\
\Leftrightarrow \quad& n(z-x) = n(z-y)
\end{split}
\end{equation}
and analogous for '$>$' and '$<$'. 
\end{proof}

Thus it is natural to conjecture that for a symmetric L\'evy process with L\'evy density $n$ a maximal coupled process is defined by \eqref{eq:mmc-feller-generator}. But note, since the test functions are not in the domain of the generator the usual methods of construction of such processes (cf. \cite{BoetSchiWang2013}) can not be applied.

The presented method works only for symmetric L\'evy processes. A natural question is, under which conditions does there exist a Markovian maximal coupling for more general Feller processes (which are truly state space dependent, or non symmetric). The above results seem to indicate, that at least the L\'evy measures should be maximally coupled. But, as we have noted before, this fixes only part of the joint L\'evy measure - the remaining part is an open problem. Moreover, without the test functions in the domain of the generator also the construction, given the state space dependent L\'evy triple, seems to call for new techniques.

\section{Appendix - Feller processes, L\'evy processes, subordination} \label{appendix}
The monograph \cite{BoetSchiWang2013} is the general reference for this section, comprehensive details about L\'evy processes and subordinators can be found in \cite{Sato99}.

A time homogeneous (strong) Markov process $(X_t)_{t\geq 0}$ on $\R^d$ is a \textbf{Feller process}, if the corresponding semigroup $(T_t)_{t\geq 0}$ on $C_\infty:=\{f:\R^d \to \R\ :\ f \text{ is continuous and }\lim_{|x|\to \infty} f(x) = 0\}$ given by
\begin{equation}
T_tf(x):= \E f(X_t^x)
\end{equation} 
is a \textbf{Feller semigroup}, i.e., it is a strongly continuous ($\lim_{t\to 0}\|T_t f-f\|_\infty =0$) contraction ($\|T_tf\|_\infty \leq \|f\|_\infty$)  semigroup ($T_0 =id,\ T_{t+s} = T_t\circ T_s,\ T_t:C_\infty \to C_\infty$) which is positivity preserving ($f\geq 0 \,\Rightarrow\,T_tf\geq 0$). The linear operator $A$ defined as the strong derivative of $T_t$ at $t=0$ is called the \textbf{generator}, i.e.,
\begin{equation} \label{def:generator}
\lim_{t\to 0} \left\| \frac{T_t f-f}{t} - Af\right\|_\infty = 0.
\end{equation} 
The domain of the generator are all $f\in C_\infty$ for which the above limit exists. We call $\hat A$ the \textbf{extended pointwise generator}, if the limit in \eqref{def:generator} is considered pointwise (instead of uniform). The domain of this operator is the set of all functions in $C_\infty$ for which all pointwise limits exist. Note that in this case $x\mapsto \hat Af(x)$ might not be continuous.

A spatially homogeneous Feller process is a \textbf{L\'evy process}, i.e., it is a stochastically continuous process with stationary and independent increments. In particular, Brownian motion is a L\'evy process. A \textbf{subordinator} is a L\'evy process with non decreasing paths.
For L\'evy processes the characteristic function of an increment $X_{t+s}-X_s$ is of the form $\xi \mapsto e^{-t\psi(\xi)}$ where
\begin{equation} \label{eq:lkf}
\psi(\xi) = \psi(0) - il\xi + \frac{1}{2} \xi Q \xi + \int_{\R^d\backslash\{0\}} (1-e^{iy\xi} + iy\xi \One_{(|y|\leq 1)})\nu(dy)
\end{equation}
with killing term $\psi(0)\geq 0$, drift vector $l\in\R^d$, Brownian coefficient $Q\in\R^{d\times d}$ and L\'evy measure $\nu$ satisfying $\int_{\R^d\backslash\{0\}} |y|^2\land 1 \, \nu(dy) <\infty$.  $(l,Q,\nu)$ is called \textbf{L\'evy triple} and it determines the process uniquely (in distribution).

We present a result which allows to calculate the L\'evy measure based on the transition probabilities, cf. \cite{Figu2008}. 
  
\begin{lemma}
Let $L_t$ be an $\R^d$-valued L\'evy process (starting in 0) with L\'evy measure $\mu$ and $f:\R^d\to \R$ be continuous, bounded and with $|f(x)| \leq c |x|^{2+\varepsilon}$ for some $c,\varepsilon>0$ and all $x\in B^d_\delta:=\{x\in\R^d\ :\ |x|\leq \delta\}$ for some $\delta>0$. Then
\begin{equation} \label{eq:calc-levymeasure}
\frac{\int_{\R} f(x)\, \Prob(L_t\in dx)}{t} \xrightarrow{t\to 0} \int_{\R} f(x) \mu(dx).
\end{equation}
\end{lemma}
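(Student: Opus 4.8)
The plan is to reduce the statement to the standard weak-convergence characterization of the Lévy measure. Recall that for a Lévy process $(L_t)_{t\geq 0}$ on $\R^d$ with triple $(l,Q,\mu)$, one has for every $g\in C_b(\R^d)$ that vanishes in a neighbourhood of the origin
\[
\frac{1}{t}\E\, g(L_t) = \frac{1}{t}\int g(x)\,\Prob(L_t\in dx) \xrightarrow{t\to 0} \int g(x)\,\mu(dx),
\]
which is a classical fact (it follows, e.g., from differentiating the Lévy--Khintchine exponent \eqref{eq:lkf}, or from \cite{Sato99}; it is also exactly the content of \cite{Figu2008}). So the only work is to upgrade from test functions supported away from $0$ to functions $f$ that are merely continuous, bounded, and controlled near the origin by $|f(x)|\le c|x|^{2+\varepsilon}$ on $B^d_\delta$.

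First I would fix a smooth cut-off: choose $\chi\in C^\infty(\R^d)$ with $0\le\chi\le 1$, $\chi\equiv 0$ on $B^d_{\delta/2}$ and $\chi\equiv 1$ outside $B^d_\delta$, and write $f = f\chi + f(1-\chi)$. The first piece $f\chi$ is bounded, continuous, and vanishes near $0$, so the standard result above applies directly and gives $\frac1t\E\,(f\chi)(L_t)\to\int f\chi\,d\mu$. Since $\mu$ restricted to $B^d_\delta\setminus\{0\}$ integrates $|x|^{2+\varepsilon}\le |x|^2\cdot\delta^\varepsilon$ (finiteness of $\int |y|^2\wedge 1\,\nu(dy)$), one also has $\int f(1-\chi)\,d\mu$ finite and $\int f\chi\,d\mu \to \int f\,d\mu$ is not needed — rather $\int f\,d\mu = \int f\chi\,d\mu + \int f(1-\chi)\,d\mu$ exactly, so it remains to show
\[
\frac{1}{t}\,\E\bigl(f(L_t)(1-\chi)(L_t)\bigr) \xrightarrow{t\to 0} \int f(x)(1-\chi)(x)\,\mu(dx).
\]
Here $g:= f\cdot(1-\chi)$ is supported in $B^d_\delta$ and satisfies $|g(x)|\le c|x|^{2+\varepsilon}$ there.

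The main obstacle is precisely this near-origin estimate, and I would handle it via the second-moment / truncation bound for Lévy processes: there is a constant $C$ (depending on the triple, but finite) with $\E\bigl(|L_t|^2\wedge 1\bigr)\le C t$ for small $t$; more precisely $\frac1t\E\,h(L_t)\to \int h\,d\mu$ and $\frac1t\E\bigl(|L_t|^2\One_{|L_t|\le\delta}\bigr)$ stays bounded as $t\to0$ (it converges to $\int_{|x|\le\delta}|x|^2\,\mu(dx) + $ a Gaussian contribution of order... — in fact $\frac1t\E(|L_t|^2\One_{|L_t|\le 1})\to \operatorname{tr}Q + \int_{|x|\le1}|x|^2\mu(dx)$). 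Using $|g(L_t)|\le c|L_t|^{2+\varepsilon}\One_{|L_t|\le\delta}\le c\,\delta^\varepsilon\,|L_t|^2\One_{|L_t|\le\delta}$ gives uniform integrability-type control, so that after splitting $\R^d = \{|x|\le\eta\}\cup\{\eta<|x|\le\delta\}$ for small $\eta$: on $\{\eta<|x|\le\delta\}$ apply the standard result to the continuous bounded function $g\One_{\{\eta<|\cdot|\}}$ (approximated by smooth functions), while on $\{|x|\le\eta\}$ bound $\frac1t|\E(g(L_t)\One_{|L_t|\le\eta})|\le c\,\eta^\varepsilon\cdot\frac1t\E(|L_t|^2\One_{|L_t|\le\delta})$, which is $O(\eta^\varepsilon)$ uniformly in small $t$ and matches $|\int_{|x|\le\eta}g\,d\mu|\le c\eta^\varepsilon\int_{|x|\le\delta}|x|^2\,\mu(dx)$ on the limit side. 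Letting $t\to0$ and then $\eta\to0$ closes the argument. The remaining verification — that $g\One_{\{\eta<|\cdot|\le\delta\}}$ (or a suitable continuous modification) is covered by the standard Lévy-measure limit — is routine approximation by $C_b$ functions vanishing near $0$, together with the observation that $\mu$ puts no mass on the sphere $\{|x|=\eta\}$ for all but countably many $\eta$, so $\eta$ can be chosen as a continuity radius.
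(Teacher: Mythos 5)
Your proof is correct, and it reaches the conclusion by a genuinely different route for the crucial near--origin estimate. You and the paper use the same cut-off decomposition and handle the part away from the origin identically (the classical vague-convergence fact $\tfrac1t\E\,g(L_t)\to\int g\,d\mu$ for bounded $g$ vanishing near $0$). The difference is how the small-ball contribution is killed: the paper writes it with the layer-cake formula and then applies Ushakov's truncation inequality together with the at-most-quadratic growth of the characteristic exponent $\psi$, obtaining a bound of order $m^{\varepsilon/(2+\varepsilon)}$ with $m=\sup_{B_\delta}|f|$ that is uniform in $t$ and vanishes as the cut-off radius shrinks; you instead keep the hypothesis radius $\delta$ fixed, introduce an inner radius $\eta$, and invoke the small-time truncated second-moment bound $\E\bigl(|L_t|^2\One_{\{|L_t|\le\delta\}}\bigr)=O(t)$ to gain the factor $\eta^{\varepsilon}$, letting $t\to0$ and then $\eta\to0$. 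Both arguments exploit exactly the same margin, namely that the exponent $2+\varepsilon$ strictly beats the quadratic behaviour of the process near $0$ (Gaussian part included). What each buys: the paper's Fourier-analytic route is self-contained once the truncation inequality is quoted, and it needs no moment asymptotics; your route is more probabilistic and arguably more transparent, but it leans on the $O(t)$ bound for $\E(|L_t|^2\wedge 1)$, which you should either cite precisely (it is part of the small-time moment asymptotics in the very reference [Figueroa-L\'opez] the paper uses, and is in fact a cousin of the statement being proved) or prove in two lines via the L\'evy--It\^o decomposition (compensated small-jump martingale plus Gaussian part plus drift give $O(t)$ second moments, and the event of a large jump has probability $O(t)$). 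Your handling of the sharp indicator on the intermediate annulus (choosing $\eta$ with $\mu(\{|x|=\eta\})=0$, or smoothing the cut-off) is the right fix and closes the only other potential gap.
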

\begin{proof}
Let $L_t$ and $f$ be as required, $\delta>0$ and $\chi_\delta\in C_c(\R^d)$ with $\One_{B^d_{\delta/2}} \leq \chi_\delta \leq \One_{B^d_\delta}$. Then  $(1-\chi_\delta)f\in C_c(\R^d\backslash\{0\})$ and \cite[Lemma 2.16]{BoetSchiWang2013} and dominated convergence imply
\begin{equation} \label{eq:levymeasure-proof-1}
\frac{1}{t}\int (1-\chi_\delta(x)) f(x)\, \Prob(L_t\in dx) \xrightarrow{t\to 0} \int(1-\chi_\delta(x))f(x)\,\mu(dx) \xrightarrow{\delta\to 0} \int f(x)\, \mu(dx).
\end{equation}
By the truncation inequality (cf. \cite{Usha2000}) and the fact that the characteristic exponent of a L\'evy process growth at most quadratic (cf. \cite[Eq. (2.11)]{BoetSchiWang2013}) we get with $m:= \sup_{x\in B^d_\delta} |f(x)|$
\begin{equation} \label{eq:levymeasure-proof-2}
\begin{split}
\frac{1}{t}\int \chi_\delta(x) f(x)\, \Prob(L_t\in dx) &=  \frac{1}{t}\int_0^\infty \Prob(\chi_\delta(L_t) f(L_t) \geq r)\, dr\\
&\leq \frac{1}{t} \int_0^m \Prob(c|L_t|^{2+\varepsilon}\geq r)\,dr\\
&\leq \frac{1}{t}   \int_0^m c'\int_{|\xi|\leq \left(\frac{c}{r}\right)^{\frac{1}{2+\varepsilon}}} (1- \mathrm{Re}(  e^{-t\psi(\xi)}))\,d\xi\,dr\\
&\leq \frac{1}{t} c'  \int_0^m t \sup_{|\xi|\leq \left(\frac{c}{r}\right)^{\frac{1}{2+\varepsilon}}} |\psi(\xi)|\, dr\\
&\leq c'' \int_0^m \left(\frac{c}{r}\right)^{\frac{2}{2+\varepsilon}}\, dr \xrightarrow{\delta \to 0} 0,
\end{split}
\end{equation}
the convergence is uniform in $t$ (here $c'$ and $c''$ are positive constants not depending on $t$ or $\delta$).

Adding \eqref{eq:levymeasure-proof-1} and \eqref{eq:levymeasure-proof-2} yields the result.
\end{proof}

Similarly, the generator of a Feller process can be described by a family of L\'evy triplets, i.e., a state space dependent triplet. In this case the analog to $\xi\mapsto\psi(\xi)$ in \eqref{eq:lkf} is a function $(x,\xi)\mapsto q(x,\xi)$ called symbol of the Feller process. But note that (most of) the theory based on symbols requires the test functions $C_c^\infty$ to be in the domain of the generator, cf. \cite{BoetSchiWang2013}. For the processes treated in the current paper, this is not the case. Therefore we had to take a different approach based on the above idea.

\section*{Acknowledgement} While working on coupling of multiplicative L\'evy processes with Anita Behme we became aware of the preprint \cite{Majk2015}. This was the starting point for the current paper. Comments of Ren\'e Schilling helped to improve the presentation. We are also grateful for comments by Wilfrid Kendall and Franziska K\"uhn.


\begin{thebibliography}{10}

\bibitem{BaneKend2015}
S.~Banerjee and W.~S. Kendall.
\newblock {C}oupling the {K}olmogorov {D}iffusion: maximality and efficiency
  considerations.
\newblock {\em Adv. in Appl. Probab.}, 48(A):15--35, 007 2016.

\bibitem{BaneKend2014}
S.~Banerjee and W.~S. Kendall.
\newblock {R}igidity for {M}arkovian maximal couplings of elliptic diffusions.
\newblock {\em Probab. Theory Related Fields}, pages 1--58, 2016.

\bibitem{BoetSchiWang2011a}
B.~B{\"o}ttcher, R.~L. Schilling, and J.~Wang.
\newblock {C}onstructions of coupling processes for {L}{\'e}vy processes.
\newblock {\em Stochastic Processes and their Applications}, 121(6):1201--1216,
  2011.

\bibitem{BoetSchiWang2013}
B.~B\"ottcher, R.~L. Schilling, and J.~Wang.
\newblock {\em {L}\'evy-{T}ype {P}rocesses: {C}onstruction, {A}pproximation and
  {S}ample {P}ath {P}roperties}, volume 2099 of {\em Lecture Notes in
  Mathematics, L\'evy Matters}.
\newblock Springer, 2013.

\bibitem{BurdKend2000}
K.~Burdzy and W.~S. Kendall.
\newblock {E}fficient {M}arkovian couplings: examples and counterexamples.
\newblock {\em Ann. Appl. Probab.}, 10(2):362--409, 2000.

\bibitem{Chen1986}
M.-F. Chen.
\newblock {C}oupling for jump processes.
\newblock {\em Acta Math. Sin.}, 2(2):123--136, 1986.

\bibitem{Chen1994a}
M.-F. Chen.
\newblock {O}ptimal {M}arkovian couplings and applications.
\newblock {\em Acta Math. Sin.}, 10(3):260--275, 1994.

\bibitem{ChenLi1989}
M.-F. Chen and S.-F. Li.
\newblock {C}oupling methods for multidimensional diffusion processes.
\newblock {\em Ann. Probab.}, 17(1):151--177, 1989.

\bibitem{Conn2007}
S.~Connor.
\newblock {\em {C}oupling: {C}utoffs, {CFTP} and tameness}.
\newblock PhD thesis, Univ. Warwick, 2007.

\bibitem{Conn2013}
S.~Connor.
\newblock {O}ptimal coadapted coupling for a random walk on the hyper-complete
  graph.
\newblock {\em J. Appl. Probab.}, 50(4):1117--1130, 2013.

\bibitem{Figu2008}
J.~Figueroa-L\'opez.
\newblock {S}mall-time moment asymptotics for {L}\'evy processes.
\newblock {\em Stat. Probab. Letters}, 2008.

\bibitem{GangMcCa1996}
W.~Gangbo and R.~J. McCann.
\newblock {T}he geometry of optimal transportation.
\newblock {\em Acta Math.}, 177(2):113--161, 1996.

\bibitem{Gold1979}
S.~Goldstein.
\newblock {M}aximal coupling.
\newblock {\em Probab. Theory Related Fields}, 46(2):193--204, 1979.

\bibitem{Grif1975}
D.~Griffeath.
\newblock {A} maximal coupling for {M}arkov chains.
\newblock {\em Probab. Theory Related Fields}, 31(2):95--106, 1975.

\bibitem{HsuStur2003}
E.~P. Hsu and K.-T. Sturm.
\newblock {M}aximal coupling of {E}uclidean {B}rownian motions.
\newblock Technical report, SFB-Preprint 85. Universit{\"a}t Bonn., 2003.

\bibitem{HsuStur2013}
E.~P. Hsu and K.-T. Sturm.
\newblock {M}aximal coupling of {E}uclidean {B}rownian motions.
\newblock {\em Comm. Math. Stat.}, 1(1):93--104, 2013.

\bibitem{Kend2010}
W.~S. Kendall.
\newblock {C}oupling time distribution asymptotics for some couplings of the
  {L}{\'e}vy stochastic area.
\newblock In N.~H. Bingham and C.~M. Goldie, editors, {\em Probability and
  Mathematical Genetics: Papers in Honour of Sir John Kingman.}, volume 378 of
  {\em London Math. Soc. Lecture Note Ser.} Cambridge Univ. Press, 2010.

\bibitem{Kend2015}
W.~S. Kendall.
\newblock {C}oupling, local times, immersions.
\newblock {\em Bernoulli}, 21(2):1014--1046, 2015.

\bibitem{Kuwa2009}
K.~Kuwada.
\newblock {C}haracterization of maximal {M}arkovian couplings for diffusion
  processes.
\newblock {\em Electron. J. Probab.}, 14(25):633--662, 2009.

\bibitem{Lind1982}
T.~Lindvall.
\newblock {O}n coupling of {B}rownian motions.
\newblock Tech. rept. 1982:23, Chalmers University of Technology and University
  of G\"oteborg., 1982.

\bibitem{Lind1992}
T.~Lindvall.
\newblock {\em {L}ectures on the coupling method}.
\newblock Dover, 1992.

\bibitem{LindRoge1986}
T.~Lindvall and L.~C.~G. Rogers.
\newblock Coupling of multidimensional diffusions by reflection.
\newblock {\em Ann. Probab.}, 14(3):860--872, 1986.

\bibitem{MaiSche2014}
J.-F. Mai and M.~Scherer.
\newblock {S}imulating from the copula that generates the maximal probability
  for a joint default under given (inhomogeneous) marginals.
\newblock In {\em Topics in Statistical Simulation}, pages 333--341. Springer,
  2014.

\bibitem{Majk2015}
M.~Majka.
\newblock {C}oupling and exponential ergodicity for stochastic differential
  equations driven by {L}\'evy processes.
\newblock arXiv:1509.08816, 2015.

\bibitem{Rose1997}
J.~S. Rosenthal.
\newblock Faithful couplings of {Markov} chains: Now equals forever.
\newblock {\em Adv. in Appl. Math.}, 18(3):372--381, apr 1997.

\bibitem{Sato99}
K.~Sato.
\newblock {\em {L}\'evy {P}rocesses and {I}nfinitely {D}ivisible
  {D}istributions}.
\newblock Cambridge University Press, Cambridge, 1999.

\bibitem{SchiPart2014}
R.~L. Schilling and L.~Partzsch.
\newblock {\em {B}rownian {M}otion}.
\newblock De Gruyter, 2nd edition, 2014.
\newblock (with a Chapter on Simulation by Bj\"orn B\"ottcher).

\bibitem{Stoy1997}
J.~Stoyanov.
\newblock {\em {C}ounterexamples in {P}robability}.
\newblock 2ed., Wiley, 1997.

\bibitem{SverSmir1990}
M.~Sverchkov and S.~Smirnov.
\newblock {M}aximal coupling of {D}-valued processes.
\newblock {\em Soviet Math. Dokl.}, 41(2):352--354, 1990.

\bibitem{Thor2000}
H.~Thorisson.
\newblock {\em {C}oupling, {S}tationarity, and {R}egeneration}.
\newblock Springer, 2000.

\bibitem{Usha2000}
N.~G. Ushakov and V.~G. Ushakov.
\newblock {S}ome inequalities for multivariate characteristic functions.
\newblock {\em J. Math. Sci. (N.Y.)}, 99(4):1485--1491, 2000.

\bibitem{Vill2003}
C.~Villani.
\newblock {\em {T}opics in optimal transportation}, volume~58 of {\em Graduate
  studies in mathematics}.
\newblock American Mathematical Society, 2003.

\bibitem{Vill2009}
C.~Villani.
\newblock {\em {O}ptimal transport: old and new}, volume 338.
\newblock Springer, 2009.

\end{thebibliography}
\end{document}